\theoremstyle{plain}
\newtheorem{thm}{Theorem}[section]
\newtheorem{lem}{Lemma}[section]
\newtheorem{cor}{Corollary}
\newtheorem*{obs}{Remark}
\theoremstyle{definition}
\newtheorem{defn}{Definition}[section]
\theoremstyle{remark}
\newcommand*{\id}{{\normalfont\hbox{1\kern-0.15em \vrule width .8pt depth-.5pt}}}
\def\@author#1{\g@addto@macro\elsauthors{\normalsize%
    \def\baselinestretch{1}%
    \upshape\authorsep#1\unskip\textsuperscript{%
      \ifx\@fnmark\@empty\else\unskip\sep\@fnmark\let\sep=,\fi
      \ifx\@corref\@empty\else\unskip\sep\@corref\let\sep=,\fi
      }%
    \def\authorsep{\unskip,\space}%
    \global\let\@fnmark\@empty
    \global\let\@corref\@empty  
    \global\let\sep\@empty}%
    \@eadauthor={#1}
}
\begin{document}

\begin{frontmatter}

\title{On the multiplicity of Laplacian eigenvalues and Fiedler partitions}

\author[Aq]{Eleonora Andreotti\corref{cor1}}
\ead{eleonora.andreotti@graduate.univaq.it}
\cortext[cor1]{Corresponding author}

\author[Bolo,INFN]{Daniel Remondini}
\ead{daniel.remondini@unibo.it}

\author[Bolo]{Graziano Servizi}

\author[Bolo,INFN]{Armando Bazzani}
\ead{armando.bazzani@unibo.it}
\address[Aq]{Department of Information Engineering, Computer Science and Mathematics (DISIM), University of L'Aquila, 67100 L'Aquila, Italy}
\address[Bolo]{Department of Physics and Astronomy (DIFA), University of Bologna, 40127 Bologna, Italy}

\address[INFN]{INFN Section of Bologna, Italy}

\begin{abstract}
In this paper we investigate the relation between eigenvalue distribution
and graph structure of two classes of graphs: the $(m,k)$-stars and $l$-dependent
graphs. We give conditions on the topology and edge weights in order to get values and multiplicities of Laplacian matrix
eigenvalues. We prove that a vertex set reduction on graphs with
$(m,k)$-star subgraphs is feasible, keeping the same eigenvalues with reduced multiplicity. Moreover, some useful eigenvectors
properties are derived up to a product with a suitable matrix.
Finally, we relate these results with Fiedler spectral partitioning
of the graph and the physical relevance of the results is shortly
discussed.
\end{abstract}
\begin{keyword}
Fiedler partitioning \sep Graph reduction \sep Laplacian eigenvalues multiplicity.
\MSC 05C50 \MSC 05C75
\end{keyword}
\end{frontmatter}


\section{Introduction}

In the context of complex networks, the Laplacian formalism can be used to find many useful properties of the underlying graph
\cite{MERRIS1994143, Chung97, benzi, biggs93, MR1324340, Mitrovic2009}.
In particular, the idea of spectral clustering is to extract some important information on the network structure from the
matrices associated with the network, by considering one or few of the leading eigenvectors \cite{Boccaletti2006175}. \\
According to the Fiedler theory, a bipartition of a graph can be obtained from the second eigenvector both of the
Laplacian matrix \cite{fiedler73, fiedler75, donath1973lower}, and of the Normalized Laplacian matrix \cite{Chung97}.
More precisely, one can obtain a good ratio cut of the graph from any vector orthogonal to the all-ones vector, with a small Rayleigh quotient \cite{conf/focs/Mihail89}.\\
In general, a different number of clusters can obtained by means of the following strategies:
\begin{description}
\item[a)]  by a Recursive Spectral Bisection (RSB) \cite{journals/concurrency/BarnardS94, I91partitioningof, CPE:CPE4330070103}: after using the Fiedler
eigenvector to split the graph into two subgraphs, one can find the Fiedler eigenvector in each of these subgraphs, and continue recursively until
some a-priori criterion is satisfied;
\item[b)] by using the first $k$  eigenvectors related to the smallest eigenvalues, to induce further partitions through clustering algorithms applied
to the corresponding invariant subspace \cite{journals/dam/AlpertKY99, journals/tcad/ChanSZ94}.
\end{description}

We consider the second approach, recalling that the optimal number $k$ of clusters is often indicated by a large gap between the $k$ and the $k+1$ eigenvalues
for both the Laplacian and Normalized Laplacian matrices \cite{Lee:2012:MSP:2213977.2214078}.\\
Within this framework, we are interested consider the algebraic multiplicity of Laplacian eigenvalues, since the corresponding eigenvectors
can be considered equivalent in a partition procedure of graphs. In presence of multiple eigenvalues, we investigate the possibility of reducing the dimensionality
of the original graph (i.e. of removing some of its nodes) keeping fixed its spectral properties
\cite{onred11, DBLP:conf/cdc/BeckLLW09, DBLP:conf/cdc/DengMM09,sonin1999state, Sadiq:2000:APM:344358.344369}.

After some preliminary remarks (section \ref{sec:2}), in section \ref{sec:3} we define two classes of graphs, by giving conditions on the graph structure
which implies the presence of multiple eigenvalues. Then we propose a reduction on the number of nodes, such that it is possible to get an identical spectrum
for the Laplacian matrices of the original and the reduced graphs (up to the eigenvalue multiplicity) with respect to a suitable diagonal
\textit{mass matrix}, that changes the link weights a plays the role of metric matrix.
Furthermore, we get a connection between the primary and the reduced graph eigenvectors.
Thanks to these results it is possible to perform a partition of the primary and the reduced graphs using the same procedure.
Finally, in section \ref{sec:4} we draw some conclusions and give an outlooks on future developments.\\

\section{Premises}\label{sec:2}
We consider an undirected weighted connected graph $\mathcal G:=(\mathcal V, \mathcal E, w)$, where the $n$ vertices $\mathcal V$ are connected by
the $\mathcal E$ edges with $w$ the weight function: $w:\mathcal E\rightarrow \mathbb R^{+}.$
Let $A$ be the weighted adjacency matrix, which is symmetric since the graph is undirected ($A\in Sym_n(\mathbb R^+)$),
$$A_{ij} = \begin{cases} w(i,j), & \mbox{if $i$ is connected to $j$ } (i\sim j) \\
0 & \mbox{otherwise }  \end{cases}$$
where $i,j\in\mathcal V,$,
the Laplacian matrix $L\in Sym_n (\mathbb R)$ and normalized Laplacian matrix $\hat L\in Sym_n (\mathbb R)$  are respectively defined
$$L_{ij} = \begin{cases} -w(i,j), & \mbox{if } i\sim j \\
\sum_{k=1}^n w(i,k), & \mbox{if } i= j\\
0 & \mbox{otherwise }  \end{cases}$$

$$\hat L_{ij} = \begin{cases} -\displaystyle \frac{w(i,j)}{\sqrt{\sum_{k=1}^n w(i,k)\sum_{k=1}^n w(k,j)}}, & \mbox{if } i\sim j \\
1, & \mbox{if } i= j\\
0 & \mbox{otherwise }.  \end{cases}$$

Whenever we refer to the $k$-th eigenvalue of a Laplacian matrix, we will refer to the $k$-th nonzero eigenvalue according to a increasing order.
For the classical results on Laplacian matrices theory, one may refer to \cite{Chung97,doi:10.1080/03081088508817681, MERRIS1994143}.

\section{Eigenvalues multiplicity theorems}\label{sec:3}
The first result is an extension of Theorem {(4)} in \cite{grone94} to weighted graphs: by defining the weighted $(m,k)$-stars in a graph,
we are able to give a condition on both the structure and edge weights of graphs in order to get the eigenvalue multiplicity.
{As we will see later, an $(m,k)-$star is nothing else that the union of a $k$-cluster of order $m$ and its $k$ neighbours.}\\
The second result, that is the main results of this work, is a further extension of the previous Theorem to understand the relation between
eigenvalue multiplicity and the structure of the weights of graphs.\\
The third result concerns the reduction of graphs with one or more $(m,k)$-stars under some conditions, and possible applications
on spectral graphs partitioning.

\subsection{$(m,k)$-star and $l$-dependent: eigenvalues multiplicity}
We recall that a vertex of a graph is said \textit{pendant} if it has exactly one neighbour, and \textit{quasi pendant}
if it is adjacent to a pendant vertex. It is possible to prove that the multiplicity $m_{L}(1)$ of the eigenvalue $\lambda=1$ of the Laplacian of an
unweighted graph, is greater or equal than the number of pendant vertices less the number of quasi pendant vertices of the graph \cite{FARIA1985255}.\\
To extend these definitions to vertices with $k$ neighbours, we define a $(m,k)$-star:

\begin{figure}[!!h]
\begin{subfigure}{}
\includegraphics[width=6cm]{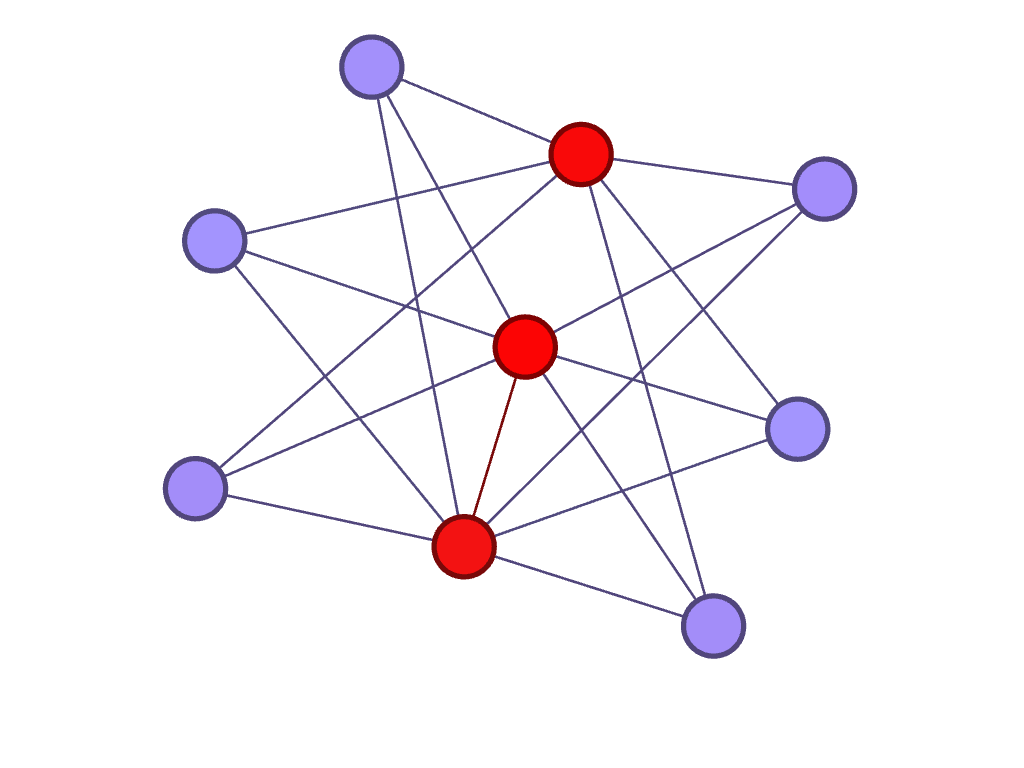}\end{subfigure}
\begin{subfigure}{}
\includegraphics[width=6cm]{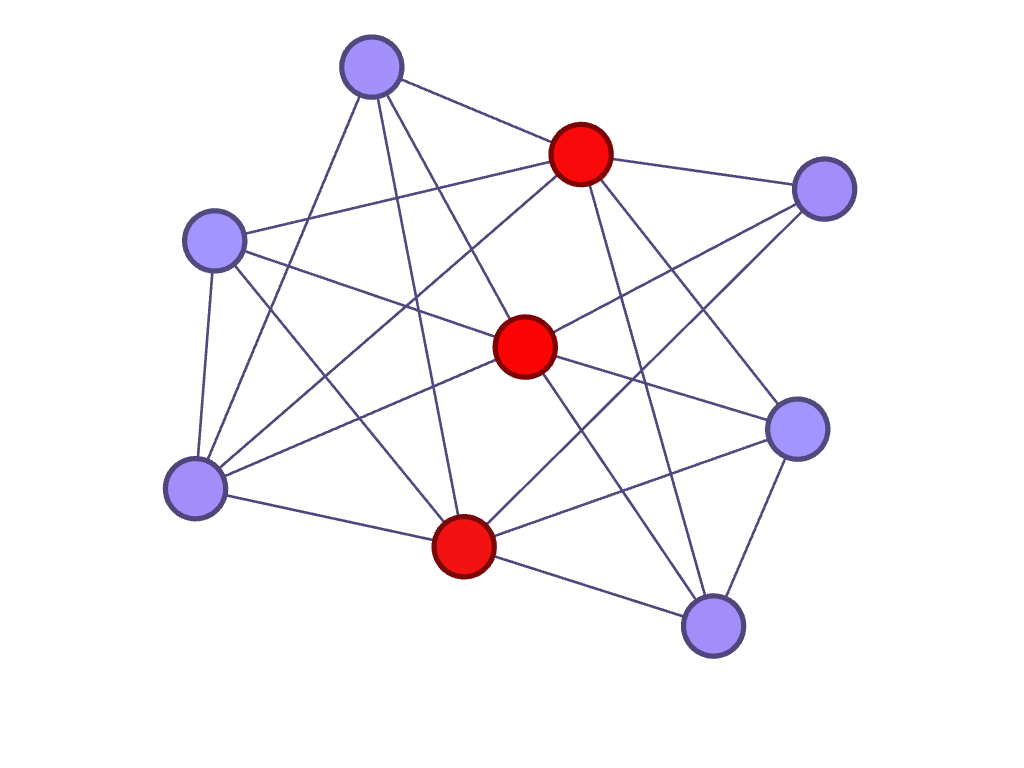}\end{subfigure}
\caption{In the left a $S_{6,3}$ graph, in the right a $S_{3,6}$ graph.}
\end{figure}

\begin{defn}[$(m,k)$-star: $S_{m,k}$ ]
A $(m,k)$-star is a graph $\mathcal G=(\mathcal V, \mathcal E,w)$ whose vertex set $\mathcal V$ has a bipartition $(\mathcal V_1,\mathcal V_2)$
of cardinalities $m$ and $k$ respectively, such that the vertices in $\mathcal V_1$ have no connections among them, and each of these vertices is connected
with all the vertices in $\mathcal V_2$: i.e
$$\forall i\in \mathcal V_1,\forall j\in \mathcal V_2,\quad (i,j)\in \mathcal E$$
$$\forall i,j\in \mathcal V_1, \quad (i,j)\notin \mathcal E$$

We denote a $(m,k)$-star graph with partitions of cardinatilty $|\mathcal V_1|=m$ and $|\mathcal V_2|=k$ by $S_{m,k}.$
\end{defn}

We define a \textit{$(m,k)$-star of a graph} $\mathcal G=(\mathcal V, \mathcal E,w)$ as the $(m,k)$-star of partitions $\mathcal V_1$, $\mathcal V_2\subset \mathcal V$,
both of them univocally determined, such that the vertices in $\mathcal V_1$ have no connection with vertices
in $\mathcal V\setminus \mathcal V_2$ in $\mathcal G., $: i.e.\\
$$\forall i\in \mathcal V_1,\forall j\in \mathcal V_2,\quad (i,j)\in \mathcal E$$
$$\forall i \in \mathcal V_1, \forall j\in \mathcal V\setminus\mathcal V_2 \quad (i,j)\notin \mathcal E$$

{ \begin{obs} In \cite{grone94} is defined a $k$-cluster of $\mathcal G$ to be an independent set of $m$ vertices of $\mathcal G$, $m>1$,
each of which with the same set of neighbours.
The order of a $k$-cluster is the number of vertices in $k$-cluster.
Therefore, the set $\mathcal V_1$ of the $(m,k)$-star is a $k$-cluster of order $m$ and the set $\mathcal V_2$ is the set of the $k$ neighbour vertices.
An $(m,k)-$star of a graph $\mathcal G$ is the union of a $k$-cluster (i.e. $\mathcal V_1$) and its neighbour vertices (i.e. $\mathcal V_2$).
\end{obs}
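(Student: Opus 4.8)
The statement is a definitional equivalence rather than a substantive theorem, so the plan is simply to unwind the two definitions and check that the combinatorial conditions imposed on an $(m,k)$-star of a graph coincide with Grone's requirements for a $k$-cluster together with its neighbour set. Both notions are purely combinatorial, no weights entering, so the whole argument reduces to verifying a few set inclusions.

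First I would establish that $\mathcal V_1$ is an independent set. For the abstract $(m,k)$-star $S_{m,k}$ this is postulated directly by the clause $(i,j)\notin\mathcal E$ for all $i,j\in\mathcal V_1$. For an $(m,k)$-star of a graph $\mathcal G$ it is not postulated explicitly, but it follows: since $(\mathcal V_1,\mathcal V_2)$ is a bipartition we have $\mathcal V_1\subseteq\mathcal V\setminus\mathcal V_2$, and the clause $(i,j)\notin\mathcal E$ for all $i\in\mathcal V_1,\,j\in\mathcal V\setminus\mathcal V_2$ specialises, on restricting $j$ to $\mathcal V_1$, to $(i,j)\notin\mathcal E$ for all $i,j\in\mathcal V_1$. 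Together with $|\mathcal V_1|=m>1$ this yields the ``independent set of $m$ vertices'' part of the definition of a $k$-cluster.

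Next I would verify that all vertices of $\mathcal V_1$ share the same neighbour set, which is the remaining requirement. Writing $N(i)$ for the set of neighbours of a vertex $i$, the clause ``each $i\in\mathcal V_1$ is connected to every $j\in\mathcal V_2$'' gives $\mathcal V_2\subseteq N(i)$, while the clause ``each $i\in\mathcal V_1$ is connected to no $j\in\mathcal V\setminus\mathcal V_2$'' gives $N(i)\subseteq\mathcal V_2$. Hence $N(i)=\mathcal V_2$ for every $i\in\mathcal V_1$, so the vertices of $\mathcal V_1$ all have the common neighbour set $\mathcal V_2$. Thus $\mathcal V_1$ is a $k$-cluster, its order is $|\mathcal V_1|=m$, and its neighbour set is $\mathcal V_2$ with $|\mathcal V_2|=k$. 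Collecting these facts with $\mathcal V=\mathcal V_1\cup\mathcal V_2$ gives the final identification of the $(m,k)$-star as the union of the $k$-cluster $\mathcal V_1$ and its neighbours $\mathcal V_2$.

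There is no genuine obstacle here, the content being a bookkeeping check. The one point needing attention is the direction of the neighbour condition: to conclude $N(i)=\mathcal V_2$ exactly, and not merely $\mathcal V_2\subseteq N(i)$, one must use both the positive adjacency clause and the negative non-adjacency clause; and in the graph version one must additionally invoke the inclusion $\mathcal V_1\subseteq\mathcal V\setminus\mathcal V_2$ to recover the internal independence of $\mathcal V_1$ that is taken as given for the abstract star.
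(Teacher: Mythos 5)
Your proposal is correct: the remark is a purely definitional identification, which the paper states without proof, and your unwinding of the two definitions (independence of $\mathcal V_1$, and $N(i)=\mathcal V_2$ for every $i\in\mathcal V_1$ via both the adjacency and non-adjacency clauses) is exactly the implicit reasoning the paper relies on. Your added care about deriving internal independence of $\mathcal V_1$ from the clause on $\mathcal V\setminus\mathcal V_2$ in the ``star of a graph'' case is a sound refinement, not a departure from the paper's approach.
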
}


By defining the degree and weight of a $(m,k)$-star we simplify the stating of the theorems on eigenvalues multiplicity.

\begin{defn}[Degree of a $(m,k)$-star: $deg(S_{m,k})$]
The \textit{degree} of a $(m,k)$- star is $deg(S_{m,k}):=m-1$ and the degree of a set $\mathcal S$ of $(m,k)$-stars, as $m$ and $k$ vary in $\mathbb N$ ,
such that $|\mathcal S|=l,$ is defined as the sum over each $(m,k)$-star degree, i.e.
$$deg(\mathcal S):=\sum_{i=1}^l deg(S_{m_i,k_i}).$$
\end{defn}

\begin{defn}[Weight of a $(m,k)$-star: $w(S_{m,k})$]
The \textit{weight} of a $(m,k)$-star of vertices set $\mathcal V_1\cup\mathcal V_2$ is defined as the strength of the vertices in $\mathcal V_1$,
provided that the following condition holds:\\
let $\{i_1,...,i_m\}=\mathcal V_1$, then $w(i_1,j)=...=w(i_m,j), \forall j\in\mathcal V_2.$. More precisely the
weight of a $(m,k)$-star: $w(S_{m,k})$ is $$w(S_{m,k}):=\sum_{j\in \mathcal V_2}w(i,j)\mbox{ for any }i\in\mathcal V_1.$$

+\end{defn}

We are ready to enunciate the first theorem, that is an extension to weighted graph of the theorem in \cite{grone94}.
Given a graph $\mathcal G=(\mathcal V,\mathcal E,w)$ associated with the Laplacian matrix $L$, and denoting $\sigma(L)$ the set of the
eigenvalues of $L$ and $m_L(\lambda)$ the algebraic multiplicity of the eigenvalue $\lambda$ in $L$, the following theorem holds\\
\begin{thm}\label{th:one}
Let
\begin{itemize}
\item $s$ be the number of all the $S_{m,k}$ as $m$ and $k$ vary in $\mathbb N$  and $m+k\leq n,$ of $\mathcal G$;
\item  $r$ be the number of $S_{m,k}$ with different weight, $w_1,...,w_r$, i.e. $w_i\neq w_j$ for each $i\neq j,$ where $ i,j\in\{1,...,r\};$\\
\end{itemize}
then for any $i\in\{1,...,r\},$
$$\exists \lambda\in{\sigma(L)} \mbox{ such that } \lambda=w_i \mbox{ and } m_{L}(\lambda)\geq deg(\mathcal S_{w_i})$$
where $\mathcal S_{w_i}:=\{S_{m,k}\in \mathcal G | w(S_{m,k})=w_i\}$.
\end{thm}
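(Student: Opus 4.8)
The plan is to exhibit, for each weight $w_i$, an explicit family of $\deg(\mathcal S_{w_i})$ linearly independent eigenvectors of $L$ all sharing the eigenvalue $w_i$; since $L$ is symmetric its geometric and algebraic multiplicities coincide, so such a family immediately yields $m_L(w_i)\ge\deg(\mathcal S_{w_i})$. The building blocks are \emph{difference vectors} of the form $e_a-e_b$, where $a,b$ belong to the independent set $\mathcal V_1$ of a single $(m,k)$-star of $\mathcal G$ of weight $w_i$, and $e_a$ denotes the $a$-th standard basis vector of $\mathbb R^n$.

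First I would fix one such star $S_{m,k}$ with partition $(\mathcal V_1,\mathcal V_2)$ and compute $L(e_a-e_b)$ entrywise. Because the vertices of $\mathcal V_1$ are adjacent only to $\mathcal V_2$, the diagonal entry $L_{aa}$ equals the strength $\sum_{j\in\mathcal V_2}w(a,j)=w(S_{m,k})=w_i$, which is the same for every vertex of $\mathcal V_1$; hence the diagonal part contributes exactly $w_i(e_a-e_b)$. For a common neighbour $j\in\mathcal V_2$ the off-diagonal terms are $-w(a,j)+w(b,j)$, and these vanish precisely by the weight hypothesis $w(a,j)=w(b,j)$ built into the definition of $w(S_{m,k})$. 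The entries at $a$ and $b$ themselves survive because $a\not\sim b$ (both lie in the independent set $\mathcal V_1$), while all remaining entries are zero since $a$ and $b$ have no neighbours outside $\mathcal V_2$. This gives $L(e_a-e_b)=w_i(e_a-e_b)$, so every difference vector is an eigenvector for $w_i$; in particular $w_i\in\sigma(L)$, which already settles the first clause of the statement.

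Next I would count the independent eigenvectors. Within a single star, writing $\mathcal V_1=\{i_1,\dots,i_m\}$, the $m-1$ vectors $e_{i_1}-e_{i_2},\dots,e_{i_1}-e_{i_m}$ are linearly independent, producing $\deg(S_{m,k})=m-1$ eigenvectors. To combine all stars of a fixed weight $w_i$, I would use that the independent sets $\mathcal V_1$ of distinct $(m,k)$-stars of $\mathcal G$ are pairwise disjoint: a vertex of $\mathcal V_1$ is characterised by its neighbour set, and two vertices sharing the same neighbourhood lie in the same $k$-cluster, so the maximal clusters partition their union (this is the content of the ``univocally determined'' requirement in the definition of an $(m,k)$-star of a graph). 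Consequently the difference vectors arising from different stars have disjoint supports, the full family is linearly independent, and its cardinality is $\sum(m-1)=\deg(\mathcal S_{w_i})$.

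I expect the main obstacle to lie in this last disjointness/independence step rather than in the eigenvector computation, which is essentially forced by the two defining conditions of the star. One must make precise that the equal-weight stars contribute difference vectors on non-overlapping vertex sets, so that no cancellation can occur when they are collected together. Once disjointness of supports is secured the linear independence is immediate, and the symmetry of $L$ upgrades the resulting geometric multiplicity bound to the stated algebraic one, completing the argument.
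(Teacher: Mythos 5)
Your proposal is correct and follows essentially the same route as the paper: the identity $L(e_a-e_b)=w_i(e_a-e_b)$ for $a,b$ in a star's independent set $\mathcal V_1$ is just the dual (column/eigenvector) form of the paper's observation that the rows of $L-w_iI$ indexed by $\mathcal V_1$ coincide, and both arguments then count $m-1$ independent vectors per star and sum over the stars of weight $w_i$. If anything, your version is tidier: it avoids the paper's detour through the adjacency matrix and the determinant shift, and it makes explicit the disjointness of the $\mathcal V_1$ sets of distinct equal-weight stars (via maximality of the $k$-clusters), a point the paper handles only implicitly through its index bookkeeping.
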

{
Before proving Theorem \ref{th:one}, we introduce some useful definitions.

\begin{defn}[$k$-pendant vertex]
A vertex of a graph is said to be \textit{$k$-pendant} if its neighborhood contains exactly $k$ vertices.
\end{defn}

\begin{defn}[$k$-quasi pendant vertex]
A vertex of a graph is said to be \textit{$k$-quasi pendant} if it is adjacent to a $k$-pendant vertex.
\end{defn}
We remark that in the definition of an $(m,k)-$star, the vertices in $\mathcal V_1$ are $k-$pendant vertices, and
vertices in $\mathcal V_2$ are $k-$quasi pendant vertices.\\

\begin{proof}\ref{th:one} \\
We consider connected graphs; indeed if a graph is not connected the same result holds, since the $(m,k)$-star degree
of the graph is the sum of the star degrees of the connected components and
the characteristic polynomial of $L$ is the product of the characteristic polynomials of the connected components. \\

Let a $(m,k)$-star of the graph $\mathcal G$.\\
Under a suitable permutation of the rows and columns of weighted adjacency matrix $A$, we can label the $k$-pendant vertices with the indices $1,...,m$,
and with $m+1,...,m+k$ the indices of the $k$-quasi pendant vertices.\\
We call $v_1,...,v_m$ the rows corresponding to $k$-pendant vertices, then the adjacency matrix has the following form
\[ A = \left( \begin{array}{ccc|ccccccc}
 0 & ... & 0 &  w(1,m+1) & w(1,m+2) &...& w(1,m+k)& 0&...&0\\
 \vdots & ... & \vdots &  \vdots & \vdots &...& \vdots & 0&...&0\\
 0  & ... & 0 &  w(m,m+1) & w(m,m+2) &...& w(m,m+k)& 0&...&0\\
\hline
w(1,m+1)& ... &  w(m,m+1)  &  &  & & & & &\\
  \vdots & ... & \vdots &  &  & & & & &\\
 w(1,m+k)& ... &  w(m,m+k) &  &  & & & & & \\
 0& ... & 0 &  &  & & & & & \\
 \vdots& ... & \vdots &  &  & A_{22} & & & \\
 0& ... & 0 &  &  & & & & & \\
\end{array} \right)\]
where the block $A_{22}$ is any $(n-m)\times(n-m)$ symmetric matrix.\\ 
The $m$ rows (and $m$ columns) $v_1,...,v_m$ are linearly dependent such that $v_1=...=v_m$, then $v_1,...,v_{m-1}\in ker(A)$.\\
Hence
$$\exists \mu_1,...,\mu_{m-1}\in\sigma(A)\quad \mbox{ such that }\quad \mu_1=...=\mu_{m-1}=0.$$

By considering the Laplacian matrix $L$, it has at least $m$ diagonal entries with value $\sum_{j=1}^k w(1,m+j)=w(S_{m,k}):=w_1$.\\

Then also in the matrix $(L-w_1 I)$ there are the linearly dependent vectors $v_i, \ i\in\{1,...,m\}$, hence  $v_1,...,v_{m-1}\in ker(L-w_1 I)$ and
$$\exists \mu_1,...,\mu_{m-1}\in\sigma(L-w_1 I)\quad \mbox{ such that }\quad \mu_1=...=\mu_{m-1}=0.$$

Let $\mu_i$ be one of these eigenvalues, then

$$0=det((L-w_1 I)-\mu_i I)=det(L-(w_1 +\mu_i )I)$$

so that $\lambda:=w_1\in\sigma (L)$ with multiplicity greater or equal to $deg(S_{m,k})$.\\

Let us now consider a number $s$ of $S_{m,k}$ in $\mathcal G$, namely $S_{m_1,k_1},...,S_{m_s,k_s}$.
Denoting $w_1,...,w_r$ the different weights of such a $(m,k)$-stars, and $r\leq s$,
we prove that for any $i\in\{1,...,r\},$
$$\exists \lambda\in{\sigma(L)} \mbox{ such that } \lambda=w_i \mbox{ and the multiplicity of } \lambda\geq deg(\mathcal S_{w_i})=
\sum_{S_{m_j,k_j}\in\mathcal S_{w_i}} deg(S_{m_j,k_j}),$$
where $\mathcal S_{w_i}:=\{S_{m,k}\in \mathcal G | w(S_{m,k})=w_i\}$.

Let $i\in\{1,...,r\}$ and let $R_i\leq r$ be the number of $(m,k)$-stars in $\mathcal  S_{w_i}$, and $\sum_{i=1}^r R_r=s$,
we assume that the first $R_1$ indexes refer to the $(m,k)$-stars in $\mathcal  S_{w_1}$,  whereas the indexes $R_1+1,...,R_1+R_2$ refer to the $(m,k)$-stars in
$\mathcal  S_{w_2}$, and so on.

We focus on the $R_i$ $(m,k)$-stars in $\mathcal  S_{w_i}$.
The rows in $A$ corresponding to the $k_j$-pendant vertices with$j\in\{\sum_{q=1}^{i-1} R_q+1,...,\sum_{q=1}^{i} R_q\}$,
are $m_j$ vectors $(v^{(j)}_{j_1},...,v^{(j)}_{j_{m_j}})$, linearly dependent and such that $v^{(j)}_{j_1}=...=v^{(j)}_{j_{m_j}}$,
whose indexes are
$$j_1=\sum_{p=1}^{j-1} m_{p}+1,...,{j_{m_j}}=\sum_{p=1}^{j-1} m_{p}+m_j$$
when $j>1$, or
$$j_1=1,...,{j_{m_j}}=m_j$$
when $j=1$.\\
Then we get
$$v^{(j)}_{j_1},...,v^{(j)}_{j_{{m_j}-1}}\in ker(A),\quad \forall j\in\{\sum_{q=1}^{j-1} R_q+1,...,\sum_{q=1}^{j} R_q\}.$$
and
$$\exists \mu_{j_1},...,\mu_{j_{{m_j}-1}}\in\sigma(A)\quad \mbox{ such that }\quad \mu_{j_1}=...=\mu_{j_{{m_j}-1}}=0.$$

This is true for each $j\in\{\sum_{q=1}^{j-1} R_q+1,...,\sum_{q=1}^{j} R_q\}$, so that
$$\exists \mu_1,...,\mu_{deg(\mathcal S_{w_i})} \in\sigma(A)\quad \mbox{ such that }\quad \mu_1=...=\mu_{deg(\mathcal S_{w_i})}=0.$$

and the Laplacian matrix $L$ has at least $deg(\mathcal S_{w_i})+R_i$ diagonal entries with value $w_i$.\\

In the matrix $(L-w_i I)$ there are $v^{(j)}_{j_q}, \ q\in\{1,...,m_j\}$ vectors linearly dependent for each $j$, as a consequence
 $v^{(j)}_{j_1},...,v^{(j)}_{j_{m_j-1}}\in ker(L-w_i I)$ and
$$\exists \mu_1,...,\mu_{deg(\mathcal S_{w_i})} \in\sigma(L-w_i I)\quad \mbox{ such that }\quad \mu_1=...=\mu_{deg(\mathcal S_{w_i})} =0.$$

Finally, let $\mu_p$ be one of these eigenvalues, then

$$0=det((L-w_i I)-\mu_p I)=det(L-(w_i +\mu_p )I)$$

and $\lambda:=w_i\in\sigma (L)$ with multiplicity greater or equal to $deg(\mathcal S_{w_i})$.\\

\end{proof}
}

Some corollaries on the signless and normalized Laplacian matrices can be obtained by using similar proofs.
Let $B$ and $\hat L$ be the signless and normalized Laplacian matrices of $\mathcal G=(\mathcal V,\mathcal E,w)$ respectively
and let $\sigma(B)$, $\sigma(\hat L)$ the eigenvalues of $B$ and $\hat L$ with algebraic multiplicity
$m_B(\lambda)$, $m_{\hat L}(\lambda)$ for the eigenvalue $\lambda$ in $B$ and $\hat L$ respectively.\\
\begin{cor}
If
\begin{itemize}
\item $s$ is the number of all the $S_{m,k}$ as $m$ and $k$ vary in $\mathbb N$  and $m+k\leq n,$ of $\mathcal G$,
\item $r$ is the number of $S_{m,k}$ with different weights, $w_1,...,w_r$,\\
\end{itemize}
then for any $i\in\{1,...,r\},$
$$\exists \lambda\in{\sigma(B)} \mbox{ such that } \lambda=w_i \mbox{ and } m_B(\lambda)\geq deg(\mathcal S_{w_i})$$
where $\mathcal S_{w_i}:=\{S_{m,k}\in \mathcal G | w(S_{m,k})=w_i\}$.
\end{cor}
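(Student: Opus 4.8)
The plan is to mirror the proof of Theorem \ref{th:one} almost verbatim, since the only structural ingredient used there was the equality of the rows (and, by symmetry, the columns) of the adjacency matrix indexed by the $k$-pendant vertices of each $(m,k)$-star, together with the value $w_i$ appearing on the corresponding diagonal entries. Writing $B = D + A$, where $D$ is the diagonal matrix of vertex strengths and $A$ is the weighted adjacency matrix, I observe that passing from $L = D - A$ to $B = D + A$ merely flips the sign in front of $A$, and this sign turns out to be immaterial for the vectors that produce the multiplicity.

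First I would fix a single $(m,k)$-star $S_{m,k}$ of weight $w_i$ and apply the same permutation of $\mathcal V$ used in Theorem \ref{th:one}, so that its $k$-pendant vertices receive the indices $1,\dots,m$. As established there, the rows $v_1,\dots,v_m$ of $A$ are identical, hence so are its columns $1,\dots,m$. Consequently, for any $1\le p<q\le m$, the difference $e_p-e_q$ of standard basis vectors satisfies $A(e_p-e_q)=0$. Next I would use that each such $k$-pendant vertex has strength exactly $w(S_{m,k})=w_i$, so the first $m$ diagonal entries of $D$ equal $w_i$, which gives $(D-w_i I)(e_p-e_q)=0$ as well. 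Combining the two facts,
\[
(B-w_i I)(e_p-e_q) = (D-w_i I)(e_p-e_q) + A(e_p-e_q) = 0,
\]
so $e_p-e_q$ is an eigenvector of $B$ for the eigenvalue $w_i$. The vectors $e_1-e_2,\dots,e_1-e_m$ are linearly independent, yielding $m-1=deg(S_{m,k})$ eigenvectors and hence $m_B(w_i)\ge deg(S_{m,k})$ for a single star.

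Finally, to treat all the $(m,k)$-stars of a common weight $w_i$ simultaneously, I would repeat the aggregation step of Theorem \ref{th:one}: the difference eigenvectors produced by distinct stars are supported on disjoint blocks of indices, because the clusters $\mathcal V_1$ of different stars are disjoint, so they remain linearly independent, and summing their counts gives $m_B(w_i)\ge deg(\mathcal S_{w_i})$. I expect no genuine obstacle here. The sign change $-A\to +A$ is harmless precisely because $A$ annihilates every difference vector $e_p-e_q$ used, so the entire argument of Theorem \ref{th:one} transfers without modification; the only point requiring care is to re-confirm that the relevant diagonal strengths are still $w_i$, which holds by the definition of the weight of a $(m,k)$-star.
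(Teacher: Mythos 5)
Your proposal is correct and follows essentially the same route the paper intends: the paper proves this corollary by remarking it follows from the proof of Theorem \ref{th:one}, and your argument is exactly that proof transferred to $B=D+A$, noting that the sign flip on $A$ is irrelevant because the equal pendant rows/columns and the diagonal entries $w_i$ force $(B-w_iI)(e_p-e_q)=0$. If anything, your use of the explicit difference vectors $e_p-e_q$ is a cleaner formalization than the paper's statement that the rows themselves lie in the kernel, and since $B$ is symmetric the $m-1$ independent eigenvectors per star indeed give algebraic multiplicity at least $deg(\mathcal S_{w_i})$ after aggregating over the disjoint clusters of equal weight.
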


\begin{cor}
If
\begin{itemize}
\item $s$ is the number of all the $S_{m,k}$ as $m$ and $k$ vary in $\mathbb N$  and $m+k\leq n,$ of $\mathcal G$,
\item $r$ is the number of $S_{m,k}$ with different weights, $w_1,...,w_r$,\\
\end{itemize}
then for any $i\in\{1,...,r\},$
$$\exists \lambda\in{\sigma(\hat L)} \mbox{ such that } \lambda=1 \mbox{ and } m_{\hat L}(\lambda)\geq \sum_{i=1}^r deg(\mathcal S_{w_i})$$
where $\mathcal S_{w_i}:=\{S_{m,k}\in \mathcal G | w(S_{m,k})=w_i\}$.
\end{cor}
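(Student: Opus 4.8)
The plan is to mirror the argument used for Theorem~\ref{th:one}, replacing the matrix $L-w_i I$ by $\hat L - I$ and exploiting the fact that the degree normalisation removes the dependence on the star weight. First I would restrict attention to a single $(m,k)$-star of $\mathcal G$ and label, exactly as in the proof of Theorem~\ref{th:one}, the $m$ $k$-pendant vertices of $\mathcal V_1$ by $1,\dots,m$ and the $k$ $k$-quasi pendant vertices of $\mathcal V_2$ by $m+1,\dots,m+k$. The defining weight condition $w(i_1,j)=\dots=w(i_m,j)$ for all $j\in\mathcal V_2$, together with the fact that a vertex of $\mathcal V_1$ has no neighbours outside $\mathcal V_2$, forces every $i\in\mathcal V_1$ to have the same strength $d_i=\sum_{j\in\mathcal V_2}w(i,j)=w(S_{m,k})$.

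The crucial computation is the comparison of two rows of $\hat L - I$ indexed by $i,i'\in\mathcal V_1$ of the same star. On the diagonal both rows carry the entry $1-1=0$; off the diagonal the only nonzero entries sit in the columns $j\in\mathcal V_2$ and equal
$$-\frac{w(i,j)}{\sqrt{d_i\,d_j}}=-\frac{w(i',j)}{\sqrt{d_{i'}\,d_j}},$$
the equality holding precisely because $w(i,j)=w(i',j)$ and $d_i=d_{i'}=w(S_{m,k})$. Hence the rows $i$ and $i'$ of $\hat L-I$ coincide, and by symmetry of $\hat L-I$ each difference $e_{i_1}-e_{i_t}$, $t=2,\dots,m$, of standard basis vectors lies in $\ker(\hat L-I)$. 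This produces $m-1=deg(S_{m,k})$ linearly independent eigenvectors of $\hat L$ for the eigenvalue $\lambda=1$, exactly as in Theorem~\ref{th:one} but with $w_i$ replaced by $1$.

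The key difference from Theorem~\ref{th:one} --- and the reason the multiplicity is now the sum over all $r$ weight classes --- is that the normalisation by $\sqrt{d_i d_j}$ makes the eigenvalue independent of $w(S_{m,k})$: every star of $\mathcal G$, no matter its weight, contributes kernel vectors of $\hat L-I$ for the \emph{same} eigenvalue $\lambda=1$. I would then repeat the construction over all $s$ stars simultaneously. Since distinct stars have disjoint vertex sets $\mathcal V_1$ (each vertex belongs to at most one $k$-cluster, as otherwise the two clusters would share a neighbourhood and coincide), the difference vectors attached to different stars are supported on disjoint index blocks and are therefore linearly independent. Collecting them gives
$$m_{\hat L}(1)\ \geq\ \sum_{\text{all }S_{m,k}\subset\mathcal G}deg(S_{m,k})\ =\ \sum_{i=1}^r deg(\mathcal S_{w_i}),$$
which is the claim. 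The only point requiring care is the disjointness of the $\mathcal V_1$ blocks, needed to guarantee global linear independence of the collected eigenvectors; the per-star computation itself is routine once the common strength $d_i=w(S_{m,k})$ is recorded.
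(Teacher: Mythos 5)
Your proposal is correct and follows exactly the route the paper intends: the paper omits an explicit proof, stating only that the corollary ``can be obtained by using similar proofs'' to Theorem~\ref{th:one}, and your argument is precisely that adaptation --- equal rows of the shifted matrix (here $\hat L - I$ in place of $L - w_iI$) plus symmetry yield difference vectors $e_{i_1}-e_{i_t}$ in the kernel, giving $deg(S_{m,k})$ eigenvectors per star. You also correctly identify the one genuinely new point, namely that the degree normalisation makes every star contribute to the \emph{same} eigenvalue $\lambda=1$ regardless of its weight (which is why the bound is $\sum_{i=1}^r deg(\mathcal S_{w_i})$ rather than a per-weight bound), and the disjointness of the $\mathcal V_1$ blocks needed for global linear independence.
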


A wider class of graphs for which the previous results can be extended is the class of the $l$-dependent graphs, defined as follows:

\begin{defn}[$l$-dependent graph: $D^l$]
A $l$-dependent graph is a graph $(\mathcal V,\mathcal E, w)$ whose vertices can be partitioned into four subsets:
the independent set $\mathcal V_1$, the central set $\mathcal V_2$, the independent set $\mathcal V_3$ and the set
$\mathcal V\setminus (\mathcal V_1\cup\mathcal V_2 \cup\mathcal V_3)$ such that

\begin{enumerate}
\item each vertex in $\mathcal V_1$ has at least one edge in $\mathcal V_2$ and vice versa, i.e.
$$\forall i\in \mathcal V_1,\exists j\in \mathcal V_2\  \mbox{ such that } \  (i,j)\in \mathcal E$$
 $$\forall j\in \mathcal V_2,\exists i\in \mathcal V_1\  \mbox{ such that } \  (i,j)\in \mathcal E$$
\item vertices in $\mathcal V_1$ and $\mathcal V_3$ have edges only in $\mathcal V_2$, i.e.
$$\forall i\in \mathcal V_1\cup\mathcal V_3,\forall j\in \mathcal V\setminus\mathcal V_2, \quad (i,j)\notin \mathcal E$$
\item vertices in $\mathcal V_3$ have only edges that are a linear combination of all the edges of some vertices in $\mathcal V_1$, i.e.
$$\forall i\in\mathcal V_3,  \exists j_1,...,j_{l_i}\in \mathcal V_1 \mbox{ such that }$$
$$\forall j\in\{j_1,...,j_{l_i}\}, \  \forall z \ \mbox{ such that } \  (j,z)\in\mathcal E, z\in\mathcal V_2\Rightarrow $$
$$\exists a(j)\in \mathbb R^{> 0} \mbox{ and }  (i,z)\in\mathcal E, \ \mbox{ such that }\  w(i,z)=a(j)w(j,z).$$
\item $\mathcal V_1, \mathcal V_2, \mathcal V_3\subseteq \mathcal V$ are kept in order to satisfy the following
condition $$l:=\max_{\mathcal V_1,\mathcal V_2, \mathcal V_3\subseteq \mathcal V}|\mathcal V_3|.$$
\end{enumerate}

A $l$-dependent graph with $|\mathcal V_3|=l$, is denoted $D^l.$
\end{defn}

\begin{figure}[!!h]
\centering
\includegraphics[width=6cm]{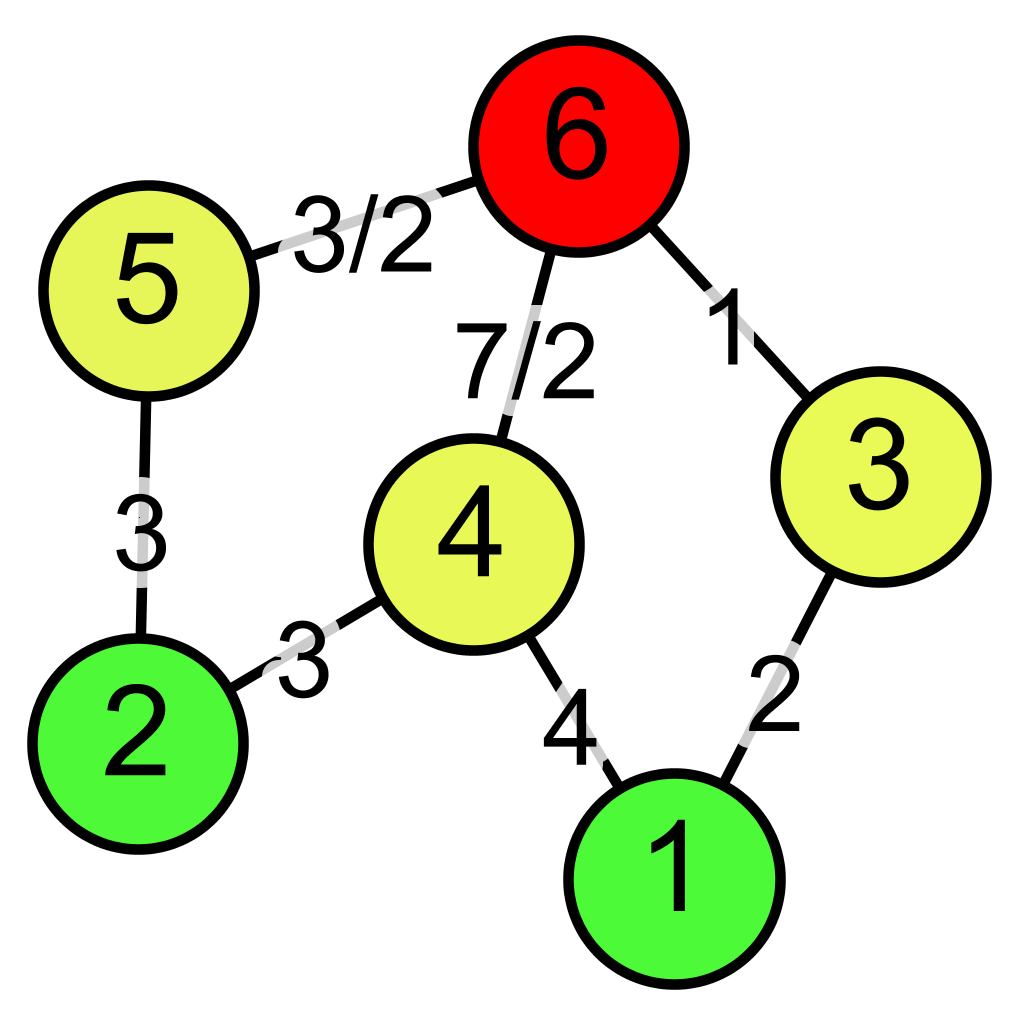}\caption{$D^l(\tilde w)$ graph, where the subsets $\mathcal V_1$
(for example the green vertices), $\mathcal V_2$ (the yellow vertices), $\mathcal V_3$ (for example the red vertex) and
$\mathcal V\setminus (\mathcal V_1\cup\mathcal V_2 \cup\mathcal V_3)$ are respectively with cardinality $\bar m=\underline m=2$,
$\bar k=\underline k=3$, $l=1$ and $|\mathcal V\setminus (\mathcal V_1\cup\mathcal V_2 \cup\mathcal V_3)|=0$. In the Laplacian matrix
there is the eigenvalue $\lambda=\tilde w=6$ with multiplicity 1.}
\end{figure}

\begin{obs}
First of all, we remark that neither the uniqueness of partition nor the cardinality of both $\mathcal V_1$ and $\mathcal V_2$ sets is guaranteed.
If we require the uniqueness of the cardinality further conditions are necessary: for instance
\begin{enumerate}
\item[5.*] maximum cardinality of the sets $\mathcal V_1,\mathcal V_2$
$$\bar{m}:=\max_{\mathcal V_1,\mathcal V_2 \subseteq \mathcal V\setminus\mathcal V_3}|\mathcal V_1|$$
$$\bar k:=\max_{\mathcal V_1,\mathcal V_2 \subseteq \mathcal V\setminus\mathcal V_3}|\mathcal V_2|$$
\item[5.**] minimum cardinality of the sets $\mathcal V_1,\mathcal V_2$
$$\underline m:=\min_{\mathcal V_1,\mathcal V_2 \subseteq \mathcal V\setminus\mathcal V_3}|\mathcal V_1|$$
$$\underline k:=\min_{\mathcal V_1,\mathcal V_2 \subseteq \mathcal V\setminus\mathcal V_3}|\mathcal V_2|.$$
\end{enumerate}

Even by requiring the maximum or minimum cardinality of both $\mathcal V_1$ and $\mathcal V_2$ sets, the uniqueness of the partition is not univocally determined.\\
The uniqueness of the set $\mathcal V_2$ is satisfied whenever one of the conditions 5. holds.
We notice that according to 5.**, the set $\mathcal V_2$ is defined as the set of all the
vertices $i\in\mathcal V$ such that $(i,j)\in\mathcal E,\ j\in\mathcal V_3.$
\end{obs}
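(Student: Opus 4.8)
The plan is to first pin down $\mathcal V_2$ by two containments that hold for \emph{any} admissible partition, and then to use the extremality in condition~5 to turn these into the explicit description $\mathcal V_2=N(\mathcal V_3)$, where I write $N(S):=\{i\in\mathcal V:\exists\,j\in S,\ (i,j)\in\mathcal E\}$ for the neighbourhood of a set $S\subseteq\mathcal V$. The first containment is immediate: the second line of condition~1 gives $\mathcal V_2\subseteq N(\mathcal V_1)$, while condition~2 gives $N(\mathcal V_1)\subseteq\mathcal V_2$ and $N(\mathcal V_3)\subseteq\mathcal V_2$. Hence already $\mathcal V_2=N(\mathcal V_1)$ and $N(\mathcal V_3)\subseteq\mathcal V_2$ with no use of extremality, so the whole difficulty is concentrated in the reverse inclusion $N(\mathcal V_1)\subseteq N(\mathcal V_3)$ and in showing that the resulting set does not depend on the chosen partition.

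For the reverse inclusion I would exploit the interplay of condition~3 with 5.** and condition~4. Minimising $|\mathcal V_1|$ while maximising $|\mathcal V_3|$ forces $\mathcal V_1$ to be a minimal set of generators: any vertex of $\mathcal V_1$ whose incident edges are a scalar multiple of those of another $\mathcal V_1$-vertex could be moved into $\mathcal V_3$, so in an extremal partition every generator that admits a parallel copy already has that copy sitting in $\mathcal V_3$. Because such a copy shares, up to the weights $a(j)$ of condition~3, the same neighbours in $\mathcal V_2$, its presence forces $N(u)\subseteq N(\mathcal V_3)$ for the corresponding generator $u$; running this over all of $\mathcal V_1$ yields $N(\mathcal V_1)\subseteq N(\mathcal V_3)$ and therefore $\mathcal V_2=N(\mathcal V_3)$. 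I would take care to isolate the degenerate situation in which a generator has no parallel copy (for instance a lone $S_{1,k}$, where $\mathcal V_3$ is empty), which is exactly the case the characterisation must exclude.

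The step I expect to be the genuine obstacle is deducing the \emph{uniqueness} of $\mathcal V_2$, since the remark insists that $\mathcal V_1$ and $\mathcal V_3$ need not be unique even under condition~5. The identity $\mathcal V_2=N(\mathcal V_3)$ is not by itself conclusive, because two extremal partitions could a priori carry different dependent sets with different neighbourhoods. I would therefore take two admissible partitions $(\mathcal V_1,\mathcal V_2,\mathcal V_3)$ and $(\mathcal V_1',\mathcal V_2',\mathcal V_3')$ satisfying 5.** and prove $N(\mathcal V_3)=N(\mathcal V_3')$ by mutual containment: a neighbour of a dependent vertex in the first partition is, through the scalar-multiple relations of condition~3, adjacent to some generator that itself possesses a dependent copy in the second partition, placing that neighbour in $N(\mathcal V_3')$, and symmetrically. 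The delicate point is that individual vertices may legitimately swap their roles as generators and dependents between the two partitions; the task is to show that such swaps permute vertices only \emph{within} the common neighbourhood and never add to it or remove from it. Once this invariance is established, $\mathcal V_2$ is the same set for every admissible partition, and both the uniqueness of $\mathcal V_2$ and the stated characterisation follow.
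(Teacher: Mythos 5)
The paper states this remark without any proof, so there is nothing to compare your route against; your attempt has to stand on its own, and it does not. Your first paragraph is correct and is the right start: conditions 1 and 2 give $\mathcal V_2=N(\mathcal V_1)$ and $N(\mathcal V_3)\subseteq\mathcal V_2$ for every admissible partition. But your middle step is muddled in two ways. First, under condition 4 no vertex of $\mathcal V_1$ can ever be ``moved into $\mathcal V_3$'' (that would contradict the maximality $|\mathcal V_3|=l$); what minimality actually forces is that every vertex of $\mathcal V_1$ is a \emph{generator}, in the sense of condition 3, of some vertex of $\mathcal V_3$ --- a redundant vertex gets moved to the residual set $\mathcal V\setminus(\mathcal V_1\cup\mathcal V_2\cup\mathcal V_3)$, not to $\mathcal V_3$, and one must also handle the cascade in which deleting it orphans a $\mathcal V_2$-vertex with respect to condition 1. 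Second, ``generator'' does not mean ``has a parallel copy'': condition 3 allows a vertex of $\mathcal V_3$ to be generated jointly by several vertices of $\mathcal V_1$, none of which is individually proportional to it. What you need, and what condition 3 gives directly, is the containment $N(j)\subseteq N(i)$ for each generator $j$ of a dependent $i$; then, once every $\mathcal V_1$-vertex generates someone, $N(\mathcal V_1)\subseteq N(\mathcal V_3)$ follows. The clean argument for the last sentence of the remark is: for the given $\mathcal V_3$, the triple $\mathcal V_1':=\{\mbox{generators}\}$, $\mathcal V_2':=N(\mathcal V_3)$, $\mathcal V_3':=\mathcal V_3$ satisfies conditions 1--4 (condition 1 for $\mathcal V_2'$ uses the ``only edges'' clause of condition 3), so the minimum of $|\mathcal V_2|$ equals $|N(\mathcal V_3)|$ and, since every admissible $\mathcal V_2$ contains $N(\mathcal V_3)$, it is attained only at $\mathcal V_2=N(\mathcal V_3)$.

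The fatal problem is the step you defer to the end. The invariance $N(\mathcal V_3)=N(\mathcal V_3')$ across extremal partitions with different dependent sets is \emph{false}, so no refinement of the ``swap'' argument can close that gap. Counterexample: take $K_{3,3}$ with parts $\{u_1,u_2,d\}$ and $\{v_1,v_2,c\}$, delete the edge $(c,d)$, and give all remaining edges weight $1$. Then $N(u_1)=N(u_2)=\{v_1,v_2,c\}$, $N(v_1)=N(v_2)=\{u_1,u_2,d\}$, the only dependable vertices are $u_1,u_2,v_1,v_2$, and any two of them are either adjacent or exhaust their proportionality class, so $l=1$. Both $(\mathcal V_1,\mathcal V_2,\mathcal V_3)=(\{u_1\},\{c,v_1,v_2\},\{u_2\})$ and $(\{v_1\},\{u_1,u_2,d\},\{v_2\})$ are admissible and both attain the minimal cardinalities $\underline m=1$, $\underline k=3$ (and, enlarging $\mathcal V_1$ by $d$, respectively $c$, the maximal ones $\bar m=2$, $\bar k=3$ as well), yet their central sets are different. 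Hence the uniqueness claim of the remark can only be read with $\mathcal V_3$ held fixed in the optimisation of 5.* and 5.** --- which is what the subscript $\mathcal V_1,\mathcal V_2\subseteq\mathcal V\setminus\mathcal V_3$ suggests --- and under that reading your final paragraph is unnecessary, while under the global reading you adopted it is unprovable.
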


\begin{figure}[!!h]
\centering
\includegraphics[width=6cm]{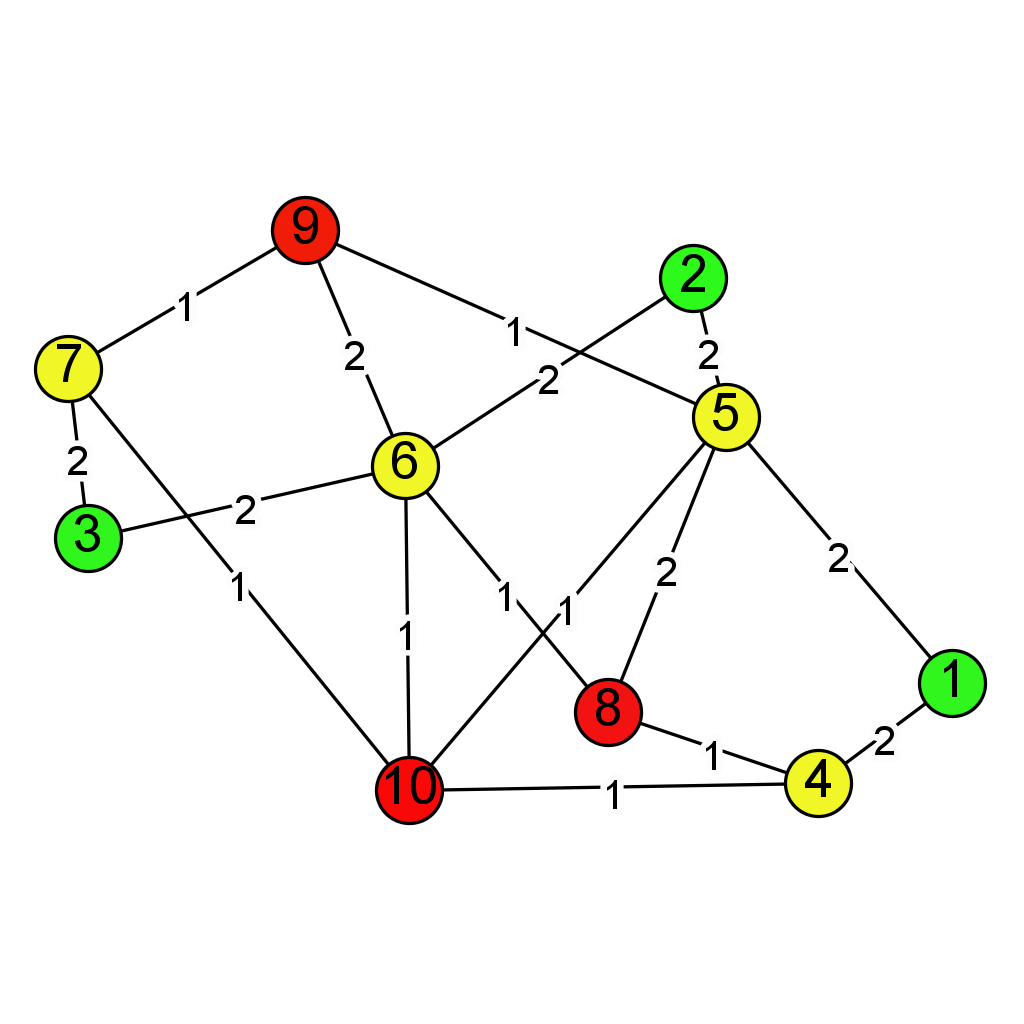}
\includegraphics[width=6cm]{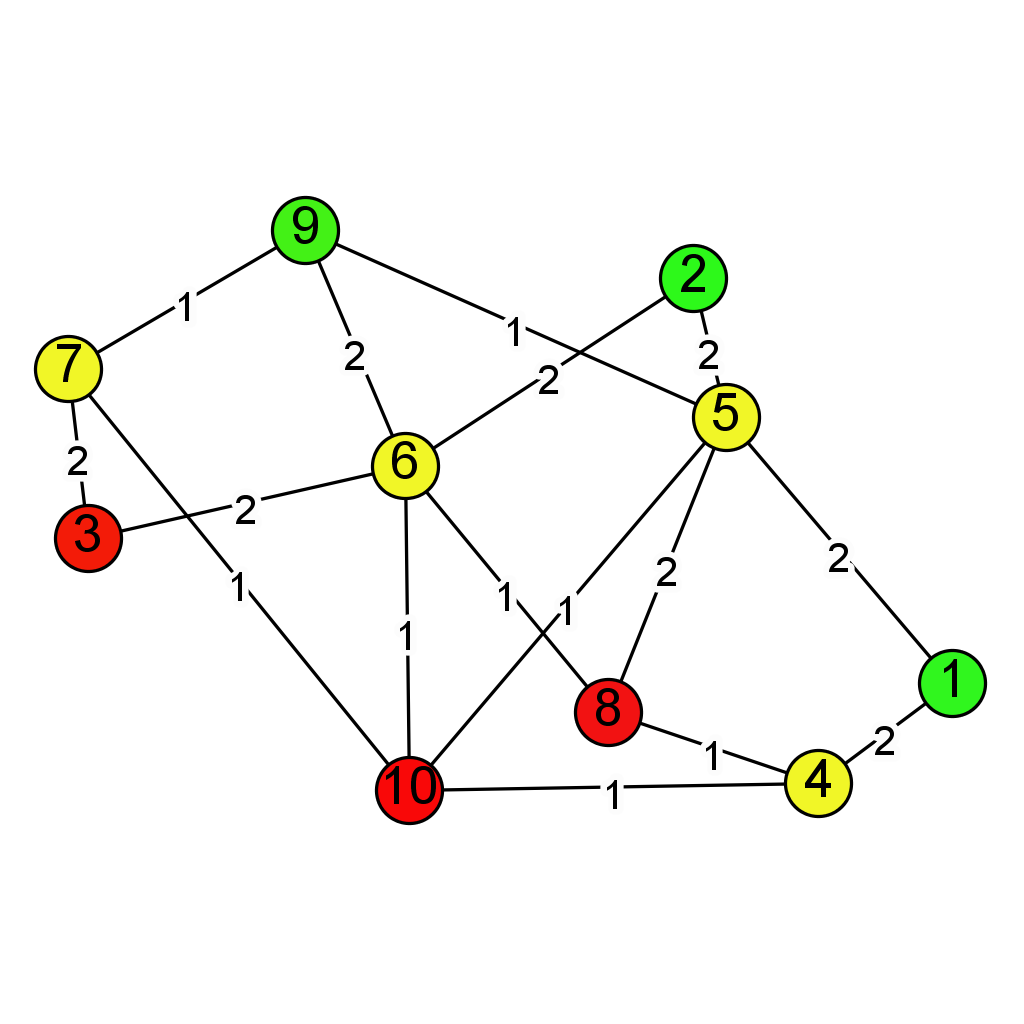}
\caption{$D^l(\tilde w)$ graph, where the subsets $\mathcal V_1$ (green vertices) and $\mathcal V_3$ (red vertices) can be chosen differently.
The cardinalities of the sets are respectively $\bar m=\underline m=3$, $\bar k=\underline k=4$, $l=3$ and
$|\mathcal V\setminus (\mathcal V_1\cup\mathcal V_2 \cup\mathcal V_3)|=0$.
In the Laplacian matrix there is the eigenvalue $\lambda=\tilde w=4$ with multiplicity 3.}
\end{figure}

\begin{obs}

Whenever in the condition [3.] the set $\{j_1,...,j_{l_i}\}$ coincides with the set $\mathcal V_1$,
 then the $l$-dependent graph is also a graph with an (m,k)-star, with m=l+1.
\end{obs}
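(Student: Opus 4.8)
The plan is to unfold the hypothesis on the dependency sets and to show that it collapses the $\mathcal V_3$--$\mathcal V_1$ structure into a single independent cluster with one common neighborhood, which is exactly the defining feature of an $(m,k)$-star. Throughout I write $N(v):=\{z\in\mathcal V:(v,z)\in\mathcal E\}$ for the neighborhood of a vertex $v$, and I use the minimal-cardinality convention (5.**), under which $\mathcal V_2$ is characterized as the set of vertices adjacent to $\mathcal V_3$, i.e. $\mathcal V_2=\bigcup_{i\in\mathcal V_3}N(i)$.

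First I would translate the assumption that $\{j_1,\dots,j_{l_i}\}=\mathcal V_1$ for each $i\in\mathcal V_3$. By the third defining condition of a $D^l$ graph, every edge of $i$ is a positive rescaling of an edge of some $j\in\mathcal V_1$, while conversely every edge of every $j\in\mathcal V_1$ is matched by an edge of $i$; by the second condition, $i$ (and each $j\in\mathcal V_1$) has neighbors only in $\mathcal V_2$. Hence for every $i\in\mathcal V_3$
$$N(i)=\bigcup_{j\in\mathcal V_1}\bigl(N(j)\cap\mathcal V_2\bigr),$$
so all vertices of $\mathcal V_3$ share one and the same neighborhood. Combining this with the convention $\mathcal V_2=\bigcup_{i\in\mathcal V_3}N(i)$ forces that common neighborhood to be all of $\mathcal V_2$, so every $i\in\mathcal V_3$ is adjacent to each of the $k:=|\mathcal V_2|$ vertices of $\mathcal V_2$, and $\mathcal V_3$ is already a $k$-cluster of order $l$.

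Next I would produce the cluster of the star of order $l+1$. To raise the order from $l$ to $l+1$ I would adjoin the vertex $j^\ast\in\mathcal V_1$ whose neighborhood is the whole of $\mathcal V_2$ (equivalently $N(j^\ast)=\bigcup_{j\in\mathcal V_1}N(j)$); the second condition guarantees that $j^\ast$ is not adjacent to any vertex of $\mathcal V_3$, so $\mathcal V_3\cup\{j^\ast\}$ is an independent set of $l+1$ vertices all sharing the neighbor set $\mathcal V_2$. By the definitions of $(m,k)$-star and of an $(m,k)$-star of a graph, the pair $\bigl(\mathcal V_3\cup\{j^\ast\},\,\mathcal V_2\bigr)$ is an $S_{l+1,k}$ of $\mathcal G$, i.e. $m=l+1$. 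I would then record the consistency with the spectral count: $\deg(S_{l+1,k})=l$, so by Theorem~\ref{th:one} the weight of this star is an eigenvalue of $L$ of multiplicity at least $l$, matching the multiplicity carried by $\mathcal V_3$ in the $l$-dependent picture.

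The main obstacle is precisely the third step: exhibiting the extra vertex that lifts the cluster order from $l$ to $l+1$, i.e. checking that some vertex of $\mathcal V_1$ is adjacent to the \emph{entire} set $\mathcal V_2$ and so shares the common neighborhood of $\mathcal V_3$ rather than only a proper part of it. This is exactly where the hypothesis ``dependency set $=\mathcal V_1$'', together with the freedom in choosing the partition noted after the definition, is essential; the independence of the enlarged cluster and the equality of the neighborhoods then follow immediately from the second condition and from the previous step. Once the common neighborhood has been pinned to $\mathcal V_2$, verifying the two bullet conditions in the definition of $S_{m,k}$ is routine.
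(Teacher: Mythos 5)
The paper states this observation as a remark with no accompanying proof, so your argument has to stand on its own; it does not, and the gap sits exactly where you yourself flagged ``the main obstacle.'' Your first two steps are fine (though the appeal to convention 5.** is unnecessary: condition 1.\ already gives $\mathcal V_2\subseteq\bigcup_{j\in\mathcal V_1}N(j)$, and condition 2.\ gives the reverse inclusion, so every $i\in\mathcal V_3$ has $N(i)=\mathcal V_2$ without any extra convention). The problem is the third step: the existence of $j^\ast\in\mathcal V_1$ with $N(j^\ast)=\mathcal V_2$ is asserted, not proved. You say this is precisely where the hypothesis $\{j_1,\dots,j_{l_i}\}=\mathcal V_1$ ``is essential,'' but that hypothesis does not deliver it: condition 3.\ only forces, for each $j\in\mathcal V_1$, the restriction of the row of $i$ to $N(j)$ to be proportional to the row of $j$; it forces no single vertex of $\mathcal V_1$ to be adjacent to \emph{all} of $\mathcal V_2$.

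Concretely, take the path on five vertices $j_1,z_1,i,z_2,j_2$ (in this order) with unit weights, and set $\mathcal V_1=\{j_1,j_2\}$, $\mathcal V_2=\{z_1,z_2\}$, $\mathcal V_3=\{i\}$. Conditions 1.--3.\ hold with dependency set $\{j_1,j_2\}=\mathcal V_1$ (take $a(j_1)=a(j_2)=1$), and a case check over the independent sets of the path shows no admissible partition has $|\mathcal V_3|\geq 2$, so $l=1$ and the hypothesis of the remark is satisfied. Yet no two vertices of this graph have equal neighborhoods, so it contains no $(2,k)$-star at all: no choice of $j^\ast$ can work, and under the literal reading of condition 3.\ the statement itself is false, so your step is not merely unjustified but unrepairable. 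What makes the remark true is the stronger (presumably intended) reading of condition 3.\ in which the \emph{whole} row of $i$ is proportional to the \emph{whole} row of each $j$ in its dependency set. Under that reading every vertex of $\mathcal V_1\cup\mathcal V_3$ has neighborhood exactly $\mathcal V_2$, the set $\mathcal V_1\cup\mathcal V_3$ is independent by condition 2., hence $(\mathcal V_1\cup\mathcal V_3,\mathcal V_2)$ is an $S_{|\mathcal V_1|+l,\,k}$ of $\mathcal G$; and then the maximality condition 4.\ --- which your proposal never invokes --- forces $|\mathcal V_1|=1$, since otherwise moving all but one vertex of $\mathcal V_1$ into $\mathcal V_3$ would produce a larger admissible $\mathcal V_3$. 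That maximality argument is the only mechanism that pins $m$ to exactly $l+1$; without it, even the favourable case only yields $m=|\mathcal V_1|+l$.
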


We define an $l$-dependent graph of weight $\tilde w$, $D^l(\tilde w)$ as the $l$-dependent graph such that each vertex $i\in\mathcal V_1\cup\mathcal V_3$
has strength $\tilde w$.\\
Now we can extend the Theorem \ref{th:one} on graphs with $(m,k)$-star to $l$-dependent graphs, that is one of the main results of this work.\\

Let $\mathcal G=(\mathcal V,\mathcal E,w)$ be a graph, and $L$ the Laplacian matrix of $\mathcal G$.

\begin{thm}\label{th:main}
If $\mathcal G$ be a $D^l(\tilde w)$ graph, with $\tilde w\in\mathbb(R^{>0})$ and $l\in \mathbb N$,\\
then
$$\exists \lambda\in{\sigma(L)} \mbox{ such that } \lambda=\tilde w \mbox{ and } m_L(\lambda)\geq l.$$
\end{thm}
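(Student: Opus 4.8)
The plan is to mimic the proof of Theorem \ref{th:one}: I would work with the shifted matrix $M:=L-\tilde w I$ and exhibit an $l$-dimensional subspace of $\ker M$; since $L$ is symmetric, geometric and algebraic multiplicities coincide, so this already yields $m_L(\tilde w)\ge l$. The decisive structural fact is that the $D^l(\tilde w)$ hypothesis—every vertex of $\mathcal V_1\cup\mathcal V_3$ has strength $\tilde w$—forces $M_{ii}=L_{ii}-\tilde w=0$ for each $i\in\mathcal V_1\cup\mathcal V_3$. Combined with condition [2.] (the vertices of $\mathcal V_1$ and $\mathcal V_3$ are adjacent only to $\mathcal V_2$) and the independence of $\mathcal V_1$ and of $\mathcal V_3$, this means that the row of $M$ indexed by any $i\in\mathcal V_1\cup\mathcal V_3$ is supported entirely on the columns indexed by $\mathcal V_2$, with entries $M_{iz}=-w(i,z)$ for $z\in\mathcal V_2$ and zeros elsewhere.

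First I would translate condition [3.] into a statement about these rows. Fixing $i\in\mathcal V_3$ with associated $j_1,\dots,j_{l_i}\in\mathcal V_1$, the scaling relations $w(i,z)=a(j)\,w(j,z)$ say that the weight vector $(w(i,z))_{z\in\mathcal V_2}$ is a linear combination $\sum_p c_p\,(w(j_p,z))_{z\in\mathcal V_2}$ of those attached to $j_1,\dots,j_{l_i}$. Because all the rows in sight vanish off $\mathcal V_2$ and vanish on their own diagonal, this proportionality lifts to an exact identity of full rows of $M$: row $i$ equals $\sum_p c_p\,(\text{row }j_p)$. Equivalently, setting $u_i:=e_i-\sum_p c_p\,e_{j_p}$ (with $e_t$ the $t$-th standard basis vector), we get $u_i^{\top}M=0$, and symmetry of $M$ turns this into $M u_i=0$, so $u_i\in\ker M$.

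Next I would verify that the $l$ vectors $\{u_i:i\in\mathcal V_3\}$ are linearly independent. Each $u_i$ carries coefficient $1$ at its own coordinate $i\in\mathcal V_3$, whereas all its remaining entries sit at coordinates $j_p\in\mathcal V_1$; since $\mathcal V_1\cap\mathcal V_3=\varnothing$ and the indices $i$ are distinct, the coordinate $i$ is a private pivot for $u_i$. Reading off the $\mathcal V_3$-coordinates of a vanishing combination $\sum_i\alpha_i u_i=0$ then forces every $\alpha_i=0$. Hence $\dim\ker M\ge l=|\mathcal V_3|$, i.e.\ $\tilde w\in\sigma(L)$ with $m_L(\tilde w)\ge l$.

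The step I expect to be the main obstacle is the passage from the pointwise scalings in condition [3.] to a genuine linear dependence of the $\mathcal V_3$-rows on the $\mathcal V_1$-rows, equivalently the rank bound $\operatorname{rank}B\le|\mathcal V_1|$ for the biadjacency block $B=(w(k,j))_{k\in\mathcal V_2,\ j\in\mathcal V_1\cup\mathcal V_3}$. When the neighbourhoods of the $j_p$ overlap, the constants $a(j_p)$ must already be compatible on shared neighbours, and expressing $(w(i,z))_z$ as $\sum_p c_p\,(w(j_p,z))_z$ may demand coefficients $c_p$ different from the $a(j_p)$ themselves; one has to argue that such coefficients exist, which is the content the \emph{linear combination} wording is meant to encode. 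Once this dependence is in hand, the remainder—clearing the diagonal through the strength hypothesis and counting the private pivots in $\mathcal V_3$—is routine bookkeeping.
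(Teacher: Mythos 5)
Your proposal is correct and takes essentially the same route as the paper: the paper's (much terser) proof likewise reads condition [3.] as saying that each $\mathcal V_3$-row of the adjacency matrix is a linear combination of $\mathcal V_1$-rows, deduces a kernel of dimension at least $l$, and then uses the strength hypothesis to transfer that dependence to $L-\tilde w I$, exactly as you do. The obstacle you flag at the end---passing from the pointwise scalings $w(i,z)=a(j)\,w(j,z)$ to a genuine row dependence when the neighbourhoods of the $j_p$ overlap---is left unaddressed by the paper as well (it simply asserts the dependence ``by definition''), so your explicit kernel vectors $e_i-\sum_p c_p e_{j_p}$ and the private-pivot independence check make your write-up, if anything, the more complete of the two.
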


\begin{proof}
The proof is similar to Theorem \ref{th:one}. By definition of $D^l(\tilde w)$, each vertex $i\in\mathcal V_3$ has a corresponding
row in the adjacency matrix $A$, that is a linear combination of the rows of some vertices $j_1,...,j_{l_i}\in\mathcal V_1$.
Therefore the adjacency matrix $A$ has an eigenvalue $\mu=0$ of multiplicity at least $l$.
Since each vertex $i\in\mathcal V_1\cup\mathcal V_3$ has strength $\tilde w$ we can conclude the proof.
\end{proof}

\begin{obs}
The previous result does not require the conditions 5.
\end{obs}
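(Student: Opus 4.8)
The plan is to reproduce, in a more general linear-algebraic setting, the two-stage argument used for Theorem \ref{th:one}: first exhibit a kernel of the weighted adjacency matrix $A$ of dimension at least $l$, and then shift by $\tilde w\,I$ to transfer that multiplicity to the eigenvalue $\tilde w$ of $L$. The decisive structural input is condition [3.] of the definition of a $D^l$ graph, which forces each row of $A$ indexed by a vertex of $\mathcal V_3$ to coincide with a linear combination of rows indexed by vertices of $\mathcal V_1$; conditions [1.] and [2.] guarantee that these rows live only on the $\mathcal V_2$ columns, so the combination takes place inside a common coordinate block.

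First I would order the vertices so that $\mathcal V_1,\mathcal V_2,\mathcal V_3$ and $\mathcal V\setminus(\mathcal V_1\cup\mathcal V_2\cup\mathcal V_3)$ occupy consecutive indices. For each $i\in\mathcal V_3$ with parents $j_1,\dots,j_{l_i}\in\mathcal V_1$, condition [3.] supplies scalars $b^{(i)}_1,\dots,b^{(i)}_{l_i}$ with $\mathrm{row}_i(A)=\sum_{p} b^{(i)}_p\,\mathrm{row}_{j_p}(A)$. Equivalently the vector $c^{(i)}:=e_i-\sum_p b^{(i)}_p e_{j_p}$ satisfies $\big(c^{(i)}\big)^{\!\top} A=0$, and since $A$ is symmetric, $A c^{(i)}=0$. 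As $i$ ranges over the $l=|\mathcal V_3|$ vertices of $\mathcal V_3$, the family $\{c^{(i)}\}$ is linearly independent: each $c^{(i)}$ has the entry $1$ in coordinate $i\in\mathcal V_3$, whereas every $c^{(i')}$ with $i'\neq i$ vanishes there, because its subtracted terms are supported on $\mathcal V_1$, which is disjoint from $\mathcal V_3$. Hence $\dim\ker A\ge l$, i.e. $0\in\sigma(A)$ with multiplicity at least $l$.

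The second step is the shift. Writing $L=D-A$ with $D$ the diagonal strength matrix, I would note that each $c^{(i)}$ is supported entirely on $\mathcal V_1\cup\mathcal V_3$, and that by the definition of $D^l(\tilde w)$ every such vertex has strength $\tilde w$, so $D c^{(i)}=\tilde w\, c^{(i)}$. Combining this with $A c^{(i)}=0$ gives $L c^{(i)} = D c^{(i)} - A c^{(i)} = \tilde w\, c^{(i)}$, so each $c^{(i)}$ is an eigenvector of $L$ for the eigenvalue $\tilde w$. The $l$ independent vectors $c^{(i)}$ then force $\tilde w\in\sigma(L)$ with $m_L(\tilde w)\ge l$. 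Alternatively, mirroring Theorem \ref{th:one}, one passes from the $l$ null eigenvalues of $A$ to $L$ through the identity $\det\bigl((L-\tilde w I)-\mu I\bigr)=\det\bigl(L-(\tilde w+\mu)I\bigr)$, evaluated at the vanishing eigenvalues of $L-\tilde w I$.

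I expect the main obstacle to be purely bookkeeping rather than analytic: it lies in verifying that condition [3.] genuinely yields an \emph{exact} linear relation $\mathrm{row}_i(A)=\sum_p b^{(i)}_p\,\mathrm{row}_{j_p}(A)$. When a single $\mathcal V_3$ vertex depends on several $\mathcal V_1$ vertices whose neighbourhoods in $\mathcal V_2$ overlap, the per-parent proportionality constants $a(j)$ must be mutually consistent on the shared neighbours, and one must check that consistent constants combine into a single valid coefficient vector $b^{(i)}$; this is precisely the place where the $(m,k)$-star case (identical neighbourhoods, coefficient one) is recovered via the remark preceding the statement. Once the exact relation and the independence of the $c^{(i)}$ are secured, the shift argument is immediate and uses only the constant-strength hypothesis, so, as noted in the remark following the statement, conditions 5 are not needed.
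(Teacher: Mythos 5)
Your proposal is correct and follows essentially the same route as the paper: condition [3.] makes each $\mathcal V_3$-row of $A$ a linear combination of $\mathcal V_1$-rows, giving $0\in\sigma(A)$ with multiplicity at least $l$, and the constant strength $\tilde w$ on $\mathcal V_1\cup\mathcal V_3$ transfers this to the eigenvalue $\tilde w$ of $L$ --- at no point invoking the cardinality conditions 5, which is exactly what the remark asserts. Your version is in fact more careful than the paper's (explicit kernel vectors $c^{(i)}$, an explicit linear-independence argument via the $\mathcal V_3$ coordinates, and the flagged consistency caveat on overlapping parent neighbourhoods, which the paper silently assumes away), but these are refinements of the same argument rather than a different approach.
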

We observe that a $D^{l}(\tilde w)$ graph, with $l\in\mathbb{N},\ \tilde w\in\mathbb{R}^+$, could be also a $D^{l_*}(\tilde w_*)$ graph,
for any $\ l_*\in\mathbb{N},\ \tilde w_*\in\mathbb{R}^+$.

As for the Theorem \ref{th:one}, some corollaries on the signless and normalized Laplacian matrices can be obtained by means of similar proofs.
Let $\mathcal G=(\mathcal V,\mathcal E,w)$ be a graph, and $B$ and $\hat L$ the signless and normalized Laplacian matrices respectively.
\begin{cor}\label{cor:main}
If
$\tilde w_1,...,\tilde w_m\in\mathbb(R^{>0})$ and $l_1,...,l_m\in \mathbb N$ such that $\mathcal G$ is a $D^l_i(\tilde w_i)$ graph, $ i\in\{1,...,m\}$;\\
then
$$\exists \lambda \in{\sigma(\hat L)} \mbox{ such that } \lambda=1 \mbox{ and } m_{\hat L}(\lambda)\geq \sum_{i=1}^m l_i.$$
\end{cor}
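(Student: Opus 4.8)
The plan is to reduce the assertion about $\hat L$ to a statement about the nullity of the weighted adjacency matrix $A$. Writing $D$ for the diagonal strength matrix, $D_{ii}=\sum_k w(i,k)$, the definition of $\hat L$ gives the factorization $\hat L = I - D^{-1/2} A D^{-1/2}$, and since $\mathcal G$ is connected all strengths are positive, so $D^{1/2}$ is invertible. First I would observe that $\hat L x = x$ is equivalent to $D^{-1/2} A D^{-1/2} x = 0$, hence to $A(D^{-1/2}x)=0$; thus the map $x\mapsto D^{-1/2}x$ is a linear isomorphism from the eigenspace of $\hat L$ for $\lambda=1$ onto $\ker A$. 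Consequently $m_{\hat L}(1)=\dim\ker A$, and it suffices to bound the nullity of $A$ from below by $\sum_{i=1}^m l_i$.

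Next I would extract, for each index $i$, a family of $l_i$ null vectors of $A$ coming from the $D^{l_i}(\tilde w_i)$ structure, exactly as in the proof of Theorem \ref{th:main}. For every vertex $p$ in the corresponding set $\mathcal V_3^{(i)}$, condition [3.] of the $l$-dependent graph definition makes the $p$-th row of $A$ a linear combination of rows indexed by vertices of $\mathcal V_1^{(i)}$; since $A$ is symmetric, the vector $e_p - \sum_q c_q e_{j_q}$ then lies in $\ker A$. These $l_i$ vectors (one per vertex of $\mathcal V_3^{(i)}$) are linearly independent because each carries the coefficient $1$ on its own distinct $\mathcal V_3^{(i)}$-coordinate, and every one of them is supported on $\mathcal V_1^{(i)}\cup\mathcal V_3^{(i)}$.

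The decisive step is to show that the full collection of $\sum_{i=1}^m l_i$ null vectors, pooled over all the structures, is linearly independent; this is where the distinctness of the weights $\tilde w_1,\dots,\tilde w_m$ enters. By definition of $D^{l_i}(\tilde w_i)$, every vertex of $\mathcal V_1^{(i)}\cup\mathcal V_3^{(i)}$ has strength exactly $\tilde w_i$. Hence the support of each null vector built from structure $i$ lies entirely inside the set of vertices of strength $\tilde w_i$, and for $i\neq j$ these strength classes are disjoint. The null vectors arising from different structures therefore have pairwise disjoint supports, so independence across structures is automatic once it is known within each structure. It is worth remarking that this is precisely where $\hat L$ differs from $L$: for the ordinary Laplacian each structure contributes to its own eigenvalue $\tilde w_i$, whereas the normalization rescales every strength to $1$ and collapses all these contributions onto the single eigenvalue $\lambda=1$, so the multiplicities add.

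Combining the three steps yields $\dim\ker A\ge\sum_{i=1}^m l_i$, and therefore $m_{\hat L}(1)=\dim\ker A\ge\sum_{i=1}^m l_i$, as claimed. The only genuine obstacle is the independence argument of the third paragraph; everything else is the congruence bookkeeping of the first step together with a direct transcription of Theorem \ref{th:main}. One should also record the tacit hypothesis, consistent with the preceding corollary, that the weights $\tilde w_1,\dots,\tilde w_m$ are pairwise distinct, without which the supports need not separate and the multiplicities cannot simply be summed.
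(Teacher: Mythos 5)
Your proof is correct, and it fleshes out the route the paper only gestures at: the paper gives no explicit argument for this corollary, saying only that it ``can be obtained by means of similar proofs'' to Theorems \ref{th:one} and \ref{th:main}. Your write-up supplies the two points that remark leaves unaddressed. First, you pin down the exact mechanism by which the normalization collapses all the weights onto $\lambda=1$: writing $\hat L=I-D^{-1/2}AD^{-1/2}$ with $D$ the strength matrix (invertible since the graph is connected) gives $m_{\hat L}(1)=\dim\ker A$, whereas the paper's proof of Theorem \ref{th:main} only runs the one-directional implication from null vectors of $A$ to Laplacian eigenvectors. Second --- and this is the genuinely nontrivial point --- you prove that the $\sum_{i=1}^m l_i$ kernel vectors pooled over the different $D^{l_i}(\tilde w_i)$ structures are linearly independent: each vector built from structure $i$ is supported on $\mathcal V_1^{(i)}\cup\mathcal V_3^{(i)}$, a set of vertices whose strength is exactly $\tilde w_i$, so distinct weights force pairwise disjoint supports, and within a structure independence follows from the unit coefficient on each $\mathcal V_3^{(i)}$-coordinate. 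The paper never addresses this pooling issue at all. Finally, your observation that the statement tacitly needs $\tilde w_1,\dots,\tilde w_m$ pairwise distinct is exactly right: the paper itself notes that a $D^{l}(\tilde w)$ graph ``could be also a $D^{l_*}(\tilde w_*)$ graph,'' so without distinctness the same structure could be counted twice and the summed lower bound would fail; this mirrors the distinct-weights hypothesis that is explicit in the corollaries to Theorem \ref{th:one}.
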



\subsection{(m,k)-star graph reduction}
According to the previous results, we have defined a class of graphs whose Laplacian matrices have an eigenvalues spectrum with known multiplicities and values.
Now, our aim is to simplify the study of such graphs by collapsing these vertices into a single vertex replacing the original graph with a reduced graph.
\\
At this purpose, the following definitions are useful:
\begin{defn}[$(m,k)$-star $q$-reduced: $S^q_{m,k}$]
A $q$-reduced $(m,k)$-star is a $(m,k)$-star of vertex sets $\{\mathcal V_1,\mathcal V_2\}$, such that $q$ of its vertices in $\mathcal V_1$ are removed.
Hence the order and degree of the $S^q_{m,k}$ are $m+k-q$ and $m-q-1$ respectively.\\
\end{defn}

\begin{defn}[$q$-reduced graph: $\mathcal G^q$]
A $q$-reduced graph $\mathcal G^q$ is obtained from a graph $\mathcal G$ with some $(m,k)$-stars removing $q$ of the vertices in the set $\mathcal V_1$
of $\mathcal G.$
\end{defn}

We derive a spectrum correspondence between  graphs $\mathcal G$ and $\mathcal G^q$

\begin{defn}[Mass matrix of a $S_{m,k}^q$]
Let $\mathcal V_1$ and $\mathcal V_2$ be the vertex sets of the graph $S_{m,k}^q,\ q< m$.\\
Let $B$ be the adjacency matrix of  $S_{m,k}^q$. The mass matrix of a $S_{m,k}^q$, $M$ is a diagonal matrix of order $m+k-q$  such that

\begin{equation}\label{eq:diagM}
M_{ii} = \begin{cases} \frac{m}{m-q}, & \mbox{if } i\in\mathcal V_1 \\ 1 & \mbox{otherwise } \end{cases},
\end{equation}
\end{defn}
The mass matrix $M$ can be defined in the same way also for a graph $\mathcal G^q$, with one (or more) $S_{m,k}^q$ by means
of a matrix of order $n-q$, whenever the graph $\mathcal G^q$ is composed by $n-q$ vertices.\\

Now we state the second main result of this paper.

\begin{thm}[$(m,k)$-star adjacency matrix reduction theorem]\label{th:reduction1}
Let
\begin{itemize}
\item $\mathcal G$ be a graph, of n vertices, with a $S_{m,k}, \ m+q\leq n$,
\item $\mathcal G^q$ be the reduced graph with a $S_{m,k}^q$ instead of $S_{m,k}$, of $n-q$ vertices,\\
\item $A$ be the adjacency matrix of $\mathcal G$,
\item $B$ be the adjacency matrix of $\mathcal G^q$,
\item $M$ be the diagonal mass matrix of $\mathcal G^q$,
\end{itemize}
then
\begin{enumerate}
\item $\sigma(A)=\sigma(MB),$
\item There exists a matrix $K\in\mathbb R^{n\times (n-q)}$ such that $M^{1/2}BM^{1/2}=K^TAK$ and $K^TK=I$. Therefore, if $v$ is an eigenvector of $M^{1/2}BM^{1/2}$ for an
eigenvalue $\mu$, then Kv is an eigenvector of A for the same eigenvalue $\mu$.
\end{enumerate}
\end{thm}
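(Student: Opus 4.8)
The plan is to construct the isometry $K$ of part~2 explicitly and then read off part~1 as a corollary, using that $MB=M^{1/2}(M^{1/2}BM^{1/2})M^{-1/2}$ is similar to the symmetric matrix $M^{1/2}BM^{1/2}$, so that $\sigma(MB)=\sigma(M^{1/2}BM^{1/2})$. First I would fix the block structure coming from the $(m,k)$-star. Ordering the vertices as $\mathcal V_1$ (size $m$), $\mathcal V_2$ (size $k$) and the remaining $p=n-m-k$ vertices, the defining weight condition of an $(m,k)$-star (equal weight from every $\mathcal V_1$ vertex to each $\mathcal V_2$ vertex) forces all $m$ rows of $A$ indexed by $\mathcal V_1$ to be a common vector $(\,0\mid b^{T}\mid 0\,)$ with $b\in\mathbb R^{k}$. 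Hence
$$A=\begin{pmatrix}0 & \mathbf{1}_m b^{T} & 0\\ b\,\mathbf{1}_m^{T} & A_{22} & A_{23}\\ 0 & A_{32} & A_{33}\end{pmatrix},\qquad B=\begin{pmatrix}0 & \mathbf{1}_{m-q} b^{T} & 0\\ b\,\mathbf{1}_{m-q}^{T} & A_{22} & A_{23}\\ 0 & A_{32} & A_{33}\end{pmatrix},$$
the blocks $A_{22},A_{23},A_{33}$ being untouched by the removal of $q$ pendant vertices, and $M=\mathrm{diag}\!\big(\tfrac{m}{m-q}I_{m-q},\,I_k,\,I_p\big)$.

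Next I would look for $K$ in block-diagonal form $K=\mathrm{diag}(P,I_k,I_p)$ with $P\in\mathbb R^{m\times(m-q)}$. Then $K^{T}K=\mathrm{diag}(P^{T}P,I_k,I_p)$, so $K^{T}K=I$ iff $P^{T}P=I_{m-q}$. A direct block multiplication shows that $K^{T}AK$ agrees with $M^{1/2}BM^{1/2}$ in every block as soon as the off-diagonal coupling matches, i.e.
$$P^{T}\mathbf{1}_m=\sqrt{\tfrac{m}{m-q}}\;\mathbf{1}_{m-q}.$$
Thus the whole of part~2 reduces to producing an $m\times(m-q)$ matrix $P$ with orthonormal columns for which $P^{T}\mathbf{1}_m$ is the prescribed multiple of $\mathbf{1}_{m-q}$.

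The main obstacle is the existence of such a $P$, which I expect to be the one genuinely technical step. A norm check is encouraging: $\|P^{T}\mathbf{1}_m\|^{2}=\tfrac{m}{m-q}(m-q)=m=\|\mathbf{1}_m\|^{2}$, and since $P$ is an isometry this forces $\mathbf{1}_m\in\mathrm{range}(P)$, exactly the condition I will need for invariance below. To construct $P$ I would fix an orthogonal $Q\in\mathbb R^{m\times m}$ whose first column is $\mathbf{1}_m/\sqrt m$ and set $P=QR$; then $P^{T}P=R^{T}R$ and $P^{T}\mathbf{1}_m=\sqrt m\,(R^{T}e_1)$, so the requirements become that $R\in\mathbb R^{m\times(m-q)}$ has orthonormal columns with first row $\tfrac{1}{\sqrt{m-q}}\mathbf{1}_{m-q}^{T}$. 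Splitting off the first coordinate, this asks for $m-q$ vectors in $e_1^{\perp}\cong\mathbb R^{m-1}$ whose Gram matrix is $I_{m-q}-\tfrac{1}{m-q}\mathbf{1}_{m-q}\mathbf{1}_{m-q}^{T}$; this matrix is positive semidefinite of rank $m-q-1\le m-1$, hence realizable (a regular-simplex frame), and $P$ exists.

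Finally I would harvest both conclusions from $A$-invariance of $W:=\mathrm{range}(K)=\mathrm{range}(P)\oplus\mathbb R^{k}\oplus\mathbb R^{p}$. Because the $\mathcal V_1$-block of $Ax$ is always a multiple of $\mathbf{1}_m\in\mathrm{range}(P)$, one checks $AW\subseteq W$, so the projector $KK^{T}$ fixes the columns of $AK$ and $AK=KK^{T}AK=K\,(M^{1/2}BM^{1/2})$. If $M^{1/2}BM^{1/2}v=\mu v$ then $A(Kv)=K\,M^{1/2}BM^{1/2}v=\mu\,Kv$, with $Kv\neq0$ since $K$ is injective, which is the eigenvector statement of part~2 (note $K^{T}K=I$ alone does not suffice; the $A$-invariance is essential). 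For part~1 the same identity gives $\sigma(MB)=\sigma(M^{1/2}BM^{1/2})=\sigma(A|_{W})\subseteq\sigma(A)$; conversely, on $W^{\perp}=(\mathrm{range}\,P)^{\perp}\oplus 0\oplus 0\subseteq\mathbf{1}_m^{\perp}\oplus 0\oplus 0$ the matrix $A$ acts as $0$ (the only surviving term $(\mathbf{1}_m^{T}x_1)b$ vanishes), so every eigenvalue of $A$ outside $\sigma(A|_W)$ equals $0$, which lies in both spectra since the $m$, resp.\ $m-q$, repeated $\mathcal V_1$-rows make $A$ and $B$ singular; hence $\sigma(A)=\sigma(MB)$. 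The only bookkeeping to watch is the degenerate case $m-q=1$, where the zero eigenvalue carried by $W^{\perp}$ must be matched against $\sigma(MB)$ directly.
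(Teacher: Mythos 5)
Your proof is correct, but it reaches the theorem by a genuinely different route than the paper. The paper obtains $K$ from the machinery of equitable partitions and quotient matrices: it takes the characteristic matrix $H$ of the vertex partition that collapses the $q$ deleted vertices, asserts $MB=H^TAH$ and $H^TH=M$, sets $K=HM^{-1/2}$ (the paper writes $HM^{1/2}$, a typo, since that choice gives $K^TK=M^2\neq I$), and gets the intertwining $K\tilde B=AK$ from equitability of the partition; you instead build $K$ explicitly as a block-diagonal isometry $\mathrm{diag}(P,I_k,I_p)$, reduce everything to the single constraint $P^T\mathbf{1}_m=\sqrt{m/(m-q)}\,\mathbf{1}_{m-q}$ with $P^TP=I$, solve it by an orthogonal-completion/simplex-frame argument, and then derive both the spectral inclusion and the eigenvector transfer from $A$-invariance of $\mathrm{range}(K)$. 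Your route buys two things: it is self-contained, and it sidesteps an integrality problem in the paper's construction --- for a genuine $0$/$1$ characteristic matrix, $H^TH$ is the diagonal matrix of class sizes, so $H^TH=M$ with $M_{ii}=m/(m-q)$ on $\mathcal V_1$ can only hold when $m-q$ divides $m$ and the classes are balanced, whereas your non-combinatorial $P$ exists for every $q<m$. What the paper's route buys is brevity and a ready-made citation for $\sigma(MB)\subseteq\sigma(A)$. You also make explicit a point that is present but somewhat buried in the paper: $K^TK=I$ alone does not yield the eigenvector statement; the relation $AK=K(M^{1/2}BM^{1/2})$ (from invariance in your argument, from equitability in the paper's) is what is actually used. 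Finally, both arguments carry the same caveats: the spectra coincide only as sets (the $q$ extra zero eigenvalues of $A$ change multiplicities), the equal-weight condition ($w(i,j)$ independent of $i\in\mathcal V_1$) must be assumed, and when $q=m-1$ the set equality itself can fail by the eigenvalue $0$ --- your ``degenerate case $m-q=1$'' remark is exactly the paper's restriction ``whenever $q<m-1$''.
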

{ Before proving Theorem \ref{th:reduction1}, we recall the well known result for eigenvalues of
symmetric matrices, \cite{Hwang2004}.

\begin{lem}[Interlacing theorem]
Let $A\in Sym_n(\mathbb R)$ with eigenvalues $\mu_1(A)\geq...\geq \mu_n(A).$ For $m<n$, let $S\in\mathbb R^{n,m}$ be a
matrix with orthonormal columns, $K^TK=I$, and consider the  $B=K^TAK$ matrix, with eigenvalues $\mu_1(B)\geq...\geq \mu_m(B).$
If
\begin{itemize}
\item the eigenvalues of $B$ interlace those of $A$, that is,
$$\mu_i(A)\geq\mu_i(B)\geq\mu_{n_A-n_B+i}(A), \quad i=1,...,n_B,$$
\item if the interlacing is tight, that is, for some $0\leq k\leq n_B,$
$$\mu_i(A)=\mu_i(B), \ i=1,...,k\ \mbox{ and } \ \mu_i(B)=\mu_{n_A-n_B+i}(A), \ i=k+1,...,n_B$$
then $KB=AK.$
\end{itemize}
\end{lem}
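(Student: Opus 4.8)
The plan is to establish the statement in two stages: first the interlacing inequalities themselves via the Courant--Fischer min--max characterisation, and then the tightness consequence $KB = AK$ through the equality case of the Ky Fan trace principle. Throughout I write $K$ for the matrix with orthonormal columns ($K^TK = I_m$), with $n_A = n$ and $n_B = m$, and I fix orthonormal eigenbases $u_1,\dots,u_n$ of $A$ and $v_1,\dots,v_m$ of $B = K^TAK$, ordered by decreasing eigenvalue.

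For the interlacing, the central observation is that $K$ is a linear isometry: for every $x\in\mathbb R^m$ one has $\|Kx\| = \|x\|$ and $(Kx)^TA(Kx) = x^TBx$, so the Rayleigh quotient of $A$ at $Kx$ equals that of $B$ at $x$, and $x\mapsto Kx$ sends any $i$-dimensional subspace of $\mathbb R^m$ to an $i$-dimensional subspace of $\mathbb R^n$. First I would apply $\mu_i(B) = \max_{\dim V=i}\min_{0\ne x\in V} x^TBx/x^Tx$ and push the optimal subspace forward through $K$ to obtain $\mu_i(B)\le\mu_i(A)$. The companion bound $\mu_i(B)\ge\mu_{n-m+i}(A)$ follows by applying the same inequality to $-A$ and $-B = K^T(-A)K$ and reindexing. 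This yields the interlacing for $i = 1,\dots,m$.

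For the tightness part, the plan is to repackage the termwise equalities as two trace equalities and invoke Ky Fan. Summing the top $k$ equalities gives $\sum_{i=1}^k\mu_i(B) = \sum_{i=1}^k\mu_i(A)$; since $\sum_{i=1}^k\mu_i(B) = \sum_{i=1}^k (Kv_i)^TA(Kv_i) = \operatorname{tr}\!\big(Y_{\mathrm{top}}^TAY_{\mathrm{top}}\big)$ for the orthonormal system $Y_{\mathrm{top}} = [\,Kv_1,\dots,Kv_k\,]$, this says $Y_{\mathrm{top}}$ attains the maximum in Ky Fan's principle $\sum_{i=1}^k\mu_i(A) = \max_{X^TX=I}\operatorname{tr}(X^TAX)$. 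The equality case of that principle forces $T := \operatorname{span}(Kv_1,\dots,Kv_k)$ to be an $A$-invariant subspace. Symmetrically, summing the bottom $m-k$ equalities and using Ky Fan's minimum principle forces $U := \operatorname{span}(Kv_{k+1},\dots,Kv_m)$ to be $A$-invariant.

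Finally I would combine invariance with the elementary identity $K^T(AKv_i - \mu_i(B)Kv_i) = Bv_i - \mu_i(B)v_i = 0$, which says that $AKv_i - \mu_i(B)Kv_i \perp \operatorname{col}(K)$. For $i\le k$ the invariance of $T\subseteq\operatorname{col}(K)$ gives $AKv_i\in\operatorname{col}(K)$, whence $AKv_i = KK^TAKv_i = KBv_i = \mu_i(B)Kv_i$; the same conclusion holds for $i>k$ using $U$. As the $v_i$ form a basis of $\mathbb R^m$, this is exactly $AK = KB$. The main obstacle is precisely this tightness step: a single top equality $\mu_i(B)=\mu_i(A)$ does \emph{not} by itself make $Kv_i$ an eigenvector, since the minimiser of a Rayleigh quotient over a fixed subspace need not be an eigenvector of $A$. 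The argument must therefore use all $k$ top equalities \emph{together} through the equality condition in Ky Fan's inequality, and the delicate point is verifying that this equality condition genuinely forces $A$-invariance of $T$ and $U$, including the case of eigenvalues of $A$ repeated across the threshold index.
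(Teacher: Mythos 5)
Your proposal is mathematically correct, but be aware that the paper never proves this lemma at all: it is quoted verbatim as background (``we recall the well known result\dots''), with a citation standing in for the proof, and it is then only invoked to justify $K\tilde B = AK$ in Theorem~3.3. So any proof you supply is necessarily ``different from the paper''; the fair comparison is with the classical argument in the cited literature (Haemers-style interlacing). There, the interlacing inequalities are obtained not by pushing optimal Courant--Fischer subspaces forward, but by exhibiting for each $i$ a nonzero $s_i\in\mathrm{span}(v_1,\dots,v_i)$ with $Ks_i\perp\mathrm{span}(u_1,\dots,u_{i-1})$ and sandwiching $\mu_i(A)\geq R_A(Ks_i)=R_B(s_i)\geq\mu_i(B)$; the payoff of that construction is that the equality case immediately makes $Ks_i$ an eigenvector of $A$, and tight interlacing then yields, by a short induction, that every $Kv_i$ is a $\mu_i(B)$-eigenvector of $A$, whence $KBv_i=\mu_i(B)Kv_i=AKv_i$ for all $i$ and $KB=AK$ --- no trace inequalities needed. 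Your route replaces that eigenvector-by-eigenvector induction with the aggregate equality case of Ky Fan's principle, and the one step you flag as delicate does go through: if $P$ is the rank-$k$ orthogonal projector onto $\mathrm{span}(Kv_1,\dots,Kv_k)$, then $\operatorname{tr}(PA)=\sum_j\mu_j(A)\,\|Pu_j\|^2$ with weights in $[0,1]$ summing to $k$, so equality with $\sum_{i\leq k}\mu_i(A)$ forces $\|Pu_j\|^2=1$ whenever $\mu_j(A)>\mu_k(A)$ and $\|Pu_j\|^2=0$ whenever $\mu_j(A)<\mu_k(A)$; hence $\operatorname{range}(P)$ contains the span of all eigenvectors above the threshold value and lies inside the span of those at or above it, and any such subspace is $A$-invariant even when $\mu_k(A)=\mu_{k+1}(A)$ (the tied part is a subspace of a single eigenspace, on which $A$ acts as a scalar). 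The same argument applied to $-A$ handles the bottom block, and your closing step is sound: $K^T\bigl(AKv_i-\mu_i(B)Kv_i\bigr)=0$ places $AKv_i-\mu_i(B)Kv_i$ orthogonal to $\operatorname{col}(K)$, while invariance places $AKv_i$ inside it, so $AKv_i=KK^TAKv_i=KBv_i$ on a basis of $\mathbb{R}^m$. In short: Haemers' argument is the more elementary and gives the sharper intermediate statement (individual $Kv_i$ become eigenvectors as soon as the partial equalities hold), while yours trades that for a clean variational equality-case analysis; both are legitimate, and yours correctly fills a proof the paper omitted. One editorial caution: the lemma as printed in the paper is garbled (the first bullet is a conclusion, not a hypothesis, and $S$ and $K$ denote the same matrix), and you have silently repaired it to the intended statement --- worth saying explicitly if this is to stand alone.
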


\begin{proof}
First we prove the existence of the $K$ matrix:\\
let $\mathcal P=\{P_1,...,P_{n_B}\}$ be a partition of the vertex set $\{1,...,n_A\}$, where $n_B=n_A-q.$
The \textit{ characteristic matrix H } is defined as the matrix where the $j$-th column is the characteristic vector of $P_j$ ($j=1,...,n_B$).\\
Let A be partitioned according to $\mathcal P$
\[
A=\left(
\begin{array}{ccc}
A_{1,1} & \dots & A_{1,n_B} \\
\vdots &  & \vdots \\
A_{n_B,1} & \dots & A_{n_B,n_B}
\end{array}
\right),\]

where $A_{i,j}$ denotes the block with rows in $P_i$ and columns in $P_j$.
The matrix $B=(b_{ij})$ whose entries $b_{ij}$ are the averages of the $A_{i,j}$ rows, is called the \textit{quotient matrix} of $A$ with respect $\mathcal P$,
i.e. $b_{ij}$ denote the average number of neighbours in $P_j$ of the vertices in $P_i$.\\
The partition is equitable if for each $i,j$, any vertex in $P_i$ has exactly $b_{ij}$ neighbours in $P_j$.
In such a case, the eigenvalues of the quotient matrix $B$ belong to the spectrum of $A$ ($\sigma(B)\subset\sigma(A)$) and the spectral radius
of $B$ equals the spectral radius of $A$: for more details cfr. \cite{brouwer12}, chapter 2.\\
Then we have the relations
$$MB=H^TAH, \quad H^TH=M.$$

Considering a $q$-reduced $(m,k)-$star with adjacency matrix $B$, we weight it by a diagonal mass matrix $M$ whose diagonal entries
are one except for the $m-q$ entries of the vertices in $\mathcal V_1$,

\begin{equation}\label{eq:diagM}
M_{ii} = \begin{cases} \frac{m}{m-q}, & \mbox{if } i\in\mathcal V_1 \\ 1 & \mbox{otherwise } \end{cases},
\end{equation}

and we get
$$MB\sim M^{1/2}BM^{1/2}=K^TAK, \quad K^TK=I,$$

where $K:=HM^{1/2}.$
In addition to the th.(\ref{th:one}), the eigenvalues of $MB$ are a subset of the eigenvalues of $A$,
the adjacency matrix of the corresponding $S_{m,k}$ graph

$$
\sigma(MB)\subset\sigma(A).
$$

Whenever $q<m-1$, we get $\sigma(MB)=\sigma(A)$, up to the multiplicity of the eigenvalue $\mu=0$.\\

Finally, if $v$ is an eigenvector of $M^{1/2}BM^{1/2}$ with eigenvalue $\mu$, then $Kv$ is an eigenvector of $A$ with the same eigenvalue $\mu$.\\

Indeed form the equation
$$\tilde Bv=\mu v$$
an taking into account that the partition is equitable, we have $K\tilde B=AK,$ and
$$AKv=K\tilde Bv=\mu Kv.$$

\end{proof}}

We obtain a similar result for the Laplacian matrix.

\begin{thm}[$(m,k)$-star Laplacian matrix reduction theorem]\label{th:reduction2}
If
\begin{itemize}
\item $\mathcal G$ be a graph, of n vertices, with a $S_{m,k}, \ m+q\leq n$,
\item $\mathcal G^q$ be the reduced graph with a $S_{m,k}^q$ instead of $S_{m,k}$, of $n-q$ vertices,
\item $L(A)$ be the Laplacian matrix of $\mathcal G$,
\item $L(B)$ be the Laplacian matrix of $\mathcal G^q$. Let $M$ the diagonal mass matrix of $\mathcal G^q$,
\end{itemize}
then
\begin{enumerate}
\item $\sigma(L(A)=\sigma(L(MB))$
\item There exists a matrix $K\in\mathbb R^{n\times (n-q)}$ such that $M^{1/2}BM^{1/2}=K^TAK$ and $K^TK=I$.
Therefore, if $v$ is an eigenvector of $\tilde L(M B):=diag(MB)-M^{1/2}BM^{1/2}$ for an eigenvalue $\lambda$, then Kv is
an eigenvector of L(A) for the same eigenvalue $\lambda$.
\end{enumerate}
\end{thm}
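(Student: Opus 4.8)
The plan is to reduce the Laplacian statement to the adjacency statement of Theorem \ref{th:reduction1} by splitting the Laplacian into its diagonal (degree) part and its off-diagonal (adjacency) part, and to reuse the equitable-partition machinery of that proof \emph{verbatim} on the off-diagonal part. Write $L(A)=D_A-A$, where $D_A$ is the diagonal strength matrix of $\mathcal G$, and recall that the reduced, mass-weighted Laplacian has been defined as $\tilde L(MB)=\mathrm{diag}(MB)-M^{1/2}BM^{1/2}$. The off-diagonal identity $M^{1/2}BM^{1/2}=K^{T}AK$ with $K^{T}K=I$ and $K:=HM^{1/2}$ is exactly the content of Theorem \ref{th:reduction1}, so nothing new is required there; the whole burden falls on the diagonal term.

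First I would fix the partition $\mathcal P$ of $\{1,\dots,n\}$ that collapses the $m$ mutually equivalent vertices of $\mathcal V_1$ of the $S_{m,k}$ into $m-q$ blocks and leaves every other vertex as a singleton, with characteristic matrix $H\in\mathbb R^{n\times(n-q)}$ and $H^{T}H=M$. Because the vertices of $\mathcal V_1$ share the same neighbourhood and the same incident weights (the defining weight condition of $S_{m,k}$), this partition is equitable for both $A$ and $D_A$: every block is degree-homogeneous, its common strength being $\tilde w=w(S_{m,k})$ on the $\mathcal V_1$ blocks. I would then compute $K^{T}D_AK=M^{1/2}H^{T}D_AHM^{1/2}$ and verify that, block by block, the factor $m/(m-q)$ built into $M$ is precisely the one that restores the original strengths after $q$ vertices are deleted, so that $K^{T}D_AK$ is diagonal and coincides with $\mathrm{diag}(MB)$. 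Combining the two pieces would give the target identity $\tilde L(MB)=K^{T}D_AK-M^{1/2}BM^{1/2}=K^{T}(D_A-A)K=K^{T}L(A)K$.

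With $\tilde L(MB)=K^{T}L(A)K$ and $K^{T}K=I$ in hand, the interlacing theorem applies to the symmetric pair $(L(A),\tilde L(MB))$. Since $\mathcal P$ is equitable, the interlacing is tight, which forces the relation $L(A)K=K\tilde L(MB)$; consequently $\sigma(\tilde L(MB))\subset\sigma(L(A))$ and, whenever $\tilde L(MB)v=\lambda v$, one gets $L(A)(Kv)=K\tilde L(MB)v=\lambda(Kv)$, i.e. $Kv$ is an eigenvector of $L(A)$ for the same $\lambda$, which is item (2). For item (1) I would upgrade the inclusion to an equality of spectra by counting multiplicities with Theorem \ref{th:one}: the $S_{m,k}$ contributes the eigenvalue $\tilde w$ to $\sigma(L(A))$ with multiplicity at least $\deg(S_{m,k})=m-1$, while the $q$-reduction removes exactly the associated equivalent directions, so the only spectral data lost is multiplicity of $\tilde w$. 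As long as $q<m-1$ at least one copy of $\tilde w$ survives, and the two spectra coincide up to that multiplicity, as claimed.

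The main obstacle is the diagonal bookkeeping of the second step: unlike the off-diagonal weights, which transform transparently under the congruence $K^{T}(\cdot)K$, the strengths on the diagonal must absorb the rescaling $m/(m-q)$ consistently on both the $\mathcal V_1$ blocks and their $\mathcal V_2$ neighbours, and it is here that the precise value of the mass matrix is actually forced rather than merely convenient. Concretely, the naive row-sum degree of the non-symmetric matrix $MB$ and the congruence $K^{T}D_AK$ have to be reconciled, and the cleanest way to control this is to pass through the similarity $L(MB)=\mathrm{diag}(MB)-MB\sim\tilde L(MB)$ induced by $M^{\pm1/2}$ and to identify $\tilde L(MB)$ with the equitable quotient $M^{-1}H^{T}L(A)H$ of $L(A)$, after which the spectral inclusion and the eigenvector lift follow from the standard quotient identity $L(A)H=HQ$. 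Verifying that these descriptions of the reduced Laplacian genuinely agree on the diagonal is the one computation I expect to require real care.
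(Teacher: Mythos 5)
Your overall strategy coincides with the paper's: split $L(A)=D_A-A$ into its diagonal and adjacency parts, reuse the matrix $K$ and the equitable-partition machinery of Theorem \ref{th:reduction1} for the adjacency part, check the diagonal identity separately, and combine the two intertwinings to lift eigenvectors; item (1) is then inherited from Theorem \ref{th:reduction1} exactly as you propose. The paper's proof is precisely this computation: from $AK=K\tilde B$ and $\mathrm{diag}(A)K=K\,\mathrm{diag}(MB)$ it concludes $L(A)Kv=\mathrm{diag}(A)Kv-AKv=K\,\mathrm{diag}(MB)v-K\tilde Bv=\lambda Kv$.

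There is, however, one genuine flaw in the route you describe in your third paragraph: the claim ``since $\mathcal P$ is equitable, the interlacing is tight, which forces $L(A)K=K\tilde L(MB)$.'' Equitability does \emph{not} imply tight interlacing; the implication in Haemers' theorem goes the other way (tight interlacing implies equitable). A concrete counterexample: for the path $P_4$ the partition into $\{\mbox{endpoints}\}$ and $\{\mbox{middle vertices}\}$ is equitable, the quotient matrix has eigenvalues $(1\pm\sqrt5)/2\approx 1.618,\,-0.618$, while $A(P_4)$ has eigenvalues $\pm 1.618,\,\pm 0.618$; no choice of the threshold $k$ in the interlacing lemma makes this tight, even though the quotient spectrum is contained in $\sigma(A)$. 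Relatedly, the congruence $\tilde L(MB)=K^TL(A)K$ together with $K^TK=I$ does not by itself yield the intertwining $L(A)K=K\tilde L(MB)$: one also needs the range of $K$ (equivalently of $H$) to be $L(A)$-invariant, and that invariance is exactly what equitability provides directly.

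The good news is that your own closing paragraph already contains the correct repair, which is also what the paper actually does: equitability gives the quotient identity $AH=HB_Q$ for the adjacency part, and the strength-homogeneity of each block gives $D_AH=HD'$ for the diagonal part (this is where the factor $m/(m-q)$ is forced, as you correctly anticipate); subtracting these two intertwinings yields $L(A)K=K\tilde L(MB)$ with no interlacing argument at all, after which the eigenvector lift and the spectral inclusion are immediate. With the tight-interlacing step replaced by this direct argument, your proof is correct and is essentially the paper's proof.
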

{The proof for the Laplacian version of the Reduction Theorem \ref{th:reduction1} is similar to that for the adjacency matrix,
in fact using the same arguments as in the proof of
\ref{th:reduction1}, we can say that 1. is true and that the $K$ matrix exists. So we prove directly only the second part of  point 2. of the theorem.
\begin{proof}
Let $v$ be an eigenvector of $L(\tilde B):=diag(MB)-M^{1/2}BM^{1/2}$ for an eigenvalue $\lambda$, then
$$L(\tilde B)v=\lambda v.$$
Because of $K\tilde B=AK$ and $diag(A)K=Kdiag(MB)$, we obtain
$$L(A)Kv=diag(A)Kv-AKv=Kdiag(MB)v-K\tilde B v=\lambda Kv.$$
\end{proof}}

According to the previous results, graphs with $(m,k)$-stars and graphs $q$-reduced can be partitioned in the same way, up to the removed vertices.\\

\begin{cor}\label{cor:reduction}
Under the hypothesis of theorem \ref{th:reduction2}, if $v$ is a (left or right) eigenvector of $L(MB)$ with eigenvalue $\lambda$,
then its entries have the same signs of the entries of the eigenvector $u$ of $L(A)$ with the same eigenvalue $\lambda$. \end{cor}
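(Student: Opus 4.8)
The plan is to exploit the explicit intertwining $u=Kv$ furnished by Theorem \ref{th:reduction2} together with the fact that the connecting matrix $K$ is a characteristic matrix postmultiplied by a \emph{positive} diagonal, so that applying it acts entrywise as a sign-preserving copy operation. Recall from the proof of Theorem \ref{th:reduction1} that $K=HM^{1/2}$, where $H$ is the characteristic matrix of the equitable partition $\mathcal P$ that collapses the $q$ removed vertices of $\mathcal V_1$ into their surviving class. Every vertex of $\mathcal G$ lies in exactly one block $P_{j(i)}$ of $\mathcal P$, so each row of $H$ has a single nonzero entry equal to $1$; postmultiplying by the positive diagonal $M^{1/2}$ rescales columns by strictly positive factors only. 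Hence each row of $K$ has exactly one nonzero entry, and that entry is strictly positive.

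First I would reconcile the (non-symmetric) operator $L(MB):=\operatorname{diag}(MB)-MB$ appearing in the statement with the symmetric operator $\tilde L(MB)=\operatorname{diag}(MB)-M^{1/2}BM^{1/2}$ used in Theorem \ref{th:reduction2}. Since $\operatorname{diag}(MB)$ is diagonal and $M^{-1/2}(MB)M^{1/2}=M^{1/2}BM^{1/2}$, the two operators are similar through the positive diagonal matrix $M^{1/2}$:
$$\tilde L(MB)=M^{-1/2}\,L(MB)\,M^{1/2}.$$
Consequently, if $w$ is an eigenvector of the symmetric matrix $\tilde L(MB)$ for the eigenvalue $\lambda$, then the right eigenvector of $L(MB)$ is $v=M^{1/2}w$, while (using $L(MB)^{T}=M^{-1/2}\tilde L(MB)M^{1/2}$, valid because $\tilde L(MB)$ is symmetric) the left eigenvector is $y=M^{-1/2}w$. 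As $M^{\pm 1/2}$ have strictly positive diagonal entries, both $v$ and $y$ inherit exactly the sign pattern of $w$.

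Next I would combine these observations. By Theorem \ref{th:reduction2} the full eigenvector is $u=Kw=HM^{1/2}w=Hv$, so for every vertex $i$,
$$\operatorname{sign}(u_i)=\operatorname{sign}\big((Hv)_i\big)=\operatorname{sign}\big(v_{j(i)}\big)=\operatorname{sign}\big(w_{j(i)}\big),$$
the middle equality holding because the unique nonzero entry in row $i$ of $H$ is positive and selects the coordinate $v_{j(i)}$ attached to the block containing $i$. Thus the sign of each entry of $u$ equals the sign of the corresponding reduced coordinate, the single surviving-class entry being copied onto all the collapsed $\mathcal V_1$-vertices; and the identical conclusion holds with the left eigenvector $y$ in place of $v$, since $y$ and $v$ share the sign pattern of $w$. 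This establishes the asserted sign correspondence.

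The main obstacle is the bookkeeping in the first step: one must keep the non-symmetric $L(MB)$ and the symmetric $\tilde L(MB)$ cleanly separated and check that the left and right eigenvectors of $L(MB)$ arise from the \emph{single} symmetric eigenvector $w$ through the mutually inverse positive rescalings $M^{\pm 1/2}$. Once this is in place, sign preservation is automatic, because every matrix that acts on $w$ — namely $M^{\pm 1/2}$ and $H$ — does so through strictly positive coefficients together with a one-nonzero-per-row incidence pattern, and neither of these can flip a sign or produce a spurious cancellation (zeros are likewise mapped to zeros, so the statement extends to vanishing entries as well).
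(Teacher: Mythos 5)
Your proposal is correct, and its core mechanism is the same as the paper's: the similarity $\tilde L(MB)=M^{-1/2}L(MB)M^{1/2}$ through the positive diagonal matrix $M^{1/2}$, which makes right (and left) eigenvectors of $L(MB)$ positive-diagonal rescalings of the symmetric eigenvector $w$ of $\tilde L(MB)$, hence sign-identical to it. Where you go beyond the paper is in actually closing the chain to $L(A)$: the paper's formal proof stops at $v_i=M_{ii}\tilde v_i$, i.e.\ it only establishes the sign correspondence between eigenvectors of $L(MB)$ and $\tilde L(MB)$, and leaves implicit why the eigenvector $u=K\tilde v$ of $L(A)$ produced by Theorem \ref{th:reduction2} carries the same signs. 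You make this explicit by unpacking $K=HM^{1/2}$: each row of the characteristic matrix $H$ has a single entry equal to $1$, so $u=Hv$ copies the entry $v_{j(i)}$ (with its sign, and zeros to zeros) onto every vertex $i$ of the class $P_{j(i)}$, including the collapsed $\mathcal V_1$-vertices. You also treat the left eigenvector case concretely via $L(MB)^{T}=M^{-1/2}\tilde L(MB)M^{1/2}$, whereas the paper only asserts it in the preamble text before the proof. In short: same route, but your version is the complete one; the structural observation about $K$ (one strictly positive entry per row) is exactly the missing step a careful reader must supply to get from the paper's displayed computation to the statement of the corollary.
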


Indeed, the matrices $L(MB)$ and $\tilde L(MB)$ are similar, by means of the non singular matrix $M^{1/2}$.
Furthermore, since the similarity matrix  $M^{1/2}$ is diagonal with all positive elements on the diagonal,
then both  left and right eigenvectors of $ L(MB)$ preserve the sign of the eigenvectors of $\tilde L(MB)$. {  We formally prove the Corollary.

\begin{proof} 
$\tilde L(MB)$ and $L(MB)$ are similar by means of the matrix $M^{1/2}$, in fact
\begin{eqnarray}
M^{-1/2}L(MB)M^{1/2}&=&M^{-1/2}diag(MB)M^{1/2}-M^{-1/2}MBM^{1/2}\nonumber\\
&=&diag(MB)-M^{1/2}BM^{1/2}\nonumber\\
&=&\tilde L(MB). \nonumber
\end{eqnarray}

$L(MB)$ preserves the sign of the eigenvectors of $\tilde L(MB)$.\\
If $\tilde v$ an eigenvector of $\tilde L(MB)$ of the eigenvalue $\lambda\in\sigma(\tilde L(MB))$, then
\begin{eqnarray}
\tilde L(MB) \tilde v=\lambda \tilde v & \Leftrightarrow & M^{-1/2}L(MB)M^{1/2} \tilde v=\lambda \tilde v\nonumber\\
& \Leftrightarrow & L(MB)M^{1/2} \tilde v=\lambda M^{1/2} \tilde v\nonumber
\end{eqnarray}
As a consequence $v:=M^{1/2} \tilde v$ is the eigenvector of $L(MB)$ of the eigenvalue $\lambda,$ and $v_i=(M\tilde v)_i$,
$$v_i=\sum_{r=1}^{n-q} M_{ir}\tilde v_r=M_{ii}\tilde v_i.$$

\end{proof}}
Thanks to the previous result, we can partition the primary graph $\mathcal G$ containing the $(m,k)$-star and the $q$-reduced graph $\mathcal G^q$, weighted
by the matrix $M$, in the same way except for the removed vertices.\\

\section{Concluding remarks}\label{sec:4}

In this work we considered the problem of spectral partitioning of weighted graphs that contain $(m,k)$-stars.
We showed that, under some hypotheses on edge weights, the Laplacian matrix of graphs with $(m,k)$-stars has eigenvalues of multiplicity at least $m-1$
and computable values.\\
We proved that it is possible to reduce the node cardinality of these graphs by a suitable equivalence relation, keeping the same eigenvalues
on the adjacency and Laplacian matrices up to their multiplicity.\\
Furthermore, we have shown that Laplacian matrices of both the original and reduced graphs have the same signs of the eigenvectors entries,
so that it is possible to partition both  graphs in the same way, up to removed vertices.\\
According to these results, whenever a weighted graph is composed by one or more $(m,k)$-star subgraphs, it is possible to collapse some of its vertices
into one, and to reduce the dimension of the matrices associated to these graphs, preserving the spectral properties.\\
These results can be relevant for applications to the network partitioning problems, or whenever a sort of node summarization is sought, merging nodes
with similar spectral properties.
These nodes could share similar functional properties, e. g. in the case of proteins with a similar neighborhood structure in interactome networks\cite{bioplex15},
with implications on biomedical and Systems Biology applications \cite{menche15}.
Moreover, the possibility to reduce network dimensionality by an equivalence relation among nodes can possibly be extended in a perturbative approach,
performing network reduction whenever the conditions of our theorems are 'almost satisfied', that is if some eigenvalues are sufficiently close.

\section{Acknowledgments}
The authors thank Nicola Guglielmi (University of L'Aquila, Italy), and E. A. also thanks
Domenico Felice (Max Planck Institute of Leipzig, Germany) and Carmela Scalone (University of L'Aquila, Italy) for useful discussions.

\bibliography{biblio_coal}
\bibliographystyle{plain}

\end{document}